\newtheorem{theorem}{Theorem}[section]
\newtheorem{lemma}[theorem]{Lemma}
\newtheorem{proposition}[theorem]{Proposition}
\theoremstyle{remark}
\theoremstyle{definition}
\renewcommand{\vec}[1]{\boldsymbol{#1}}
\newcommand{\R}{\mathbb{R}}
\newcommand{\N}{\mathbb{N}}
\newcommand{\E}{\mathbb{E}}
\newcommand{\tth}{\vec{\theta}}
\newcommand{\zzh}{\vec{\zeta}}
\newcommand{\eeps}{\vec{\epsilon}}
\renewcommand{\aa}{\vec{a}}
\newcommand{\yy}{\vec{y}}
\newcommand{\xx}{\vec{x}}
\newcommand{\ww}{\vec{w}}
\newcommand{\nn}{\vec{n}}
\newcommand{\uu}{\vec{u}}
\newcommand{\vv}{\vec{v}}
\renewcommand{\E}{\mathbb{E}}
\newcommand{\V}{\mathbb{V}}
\newcommand{\Diag}{\operatorname{Diag}}
\newcommand{\trace}{\operatorname{trace}}
\newcommand{\FB}{\texttt{FB}}
\newcommand{\FISTA}{\texttt{FISTA}}
\newcommand{\VDM}{\texttt{VDM}}
\newcommand{\MUL}{\texttt{MUL}}
\newcommand{\ABCDcy}{\texttt{ABCD-cy}}
\newcommand{\ABCDrp}{\texttt{ABCD-rp}}
\algnewcommand\algorithmicinput{\textbf{Input:}}
\algnewcommand\alginput{\item[\algorithmicinput]}
\algnewcommand\algorithmicoutput{\textbf{Output:}}
\algnewcommand\algoutput{\item[\algorithmicoutput]}
    \newcommand*{\algrule}[1][\algorithmicindent]{\makebox[#1][l]{\hspace*{.5em}\thealgruleextra\vrule height \thealgruleheight depth \thealgruledepth}}%
\newcommand*{\thealgruleextra}{}
\newcommand*{\thealgruleheight}{.75\baselineskip}
\newcommand*{\thealgruledepth}{.25\baselineskip}
\def\ALG@printindent{%
    \ifnum \theALG@nested>0
        \ifx\ALG@text\ALG@x@notext
        \else
            \unskip
            \addvspace{-1pt}
            \ALG@printindent@tempcnta=1
            \loop
                \algrule[\csname ALG@ind@\the\ALG@printindent@tempcnta\endcsname]%
                \advance \ALG@printindent@tempcnta 1
            \ifnum \ALG@printindent@tempcnta<\numexpr\theALG@nested+1\relax
            \repeat
        \fi
    \fi
    }%
\patchcmd{\ALG@doentity}{\noindent\hskip\ALG@tlm}{\ALG@printindent}{}{\errmessage{failed to patch}}
\newbox\statebox
\newcommand{\myState}[1]{%
    \setbox\statebox=\vbox{#1}%
    \edef\thealgruleheight{\dimexpr \the\ht\statebox+1pt\relax}%
    \edef\thealgruledepth{\dimexpr \the\dp\statebox+1pt\relax}%
    \ifdim\thealgruleheight<.75\baselineskip
        \def\thealgruleheight{\dimexpr .75\baselineskip+1pt\relax}%
    \fi
    \ifdim\thealgruledepth<.25\baselineskip
        \def\thealgruledepth{\dimexpr .25\baselineskip+1pt\relax}%
    \fi
    \State #1%
    \def\thealgruleheight{\dimexpr .75\baselineskip+1pt\relax}%
    \def\thealgruledepth{\dimexpr .25\baselineskip+1pt\relax}%
}
\begin{document}

\title{An unexpected connection between Bayes $A-$optimal designs and the Group Lasso
}

\author{Guillaume Sagnol\footnote{This work was initiated when the first author was invited in Toulouse by the Chair in Applied Mathematics OQUAIDO, gathering partners in technological research (BRGM, CEA, IFPEN, IRSN, Safran, Storengy) and academia (CNRS, Ecole Centrale de Lyon, Mines Saint-Etienne, University of Grenoble, University of Nice, University of Toulouse) around advanced methods for Computer Experiments.}
\footnote{The research of the first author is carried out in the framework of MATHEON supported by Einstein Foundation Berlin.}
\\ \emph{TU Berlin}
\and Edouard Pauwels \\ \emph{Toulouse 3 Universit\'{e} Paul Sabatier}}



\date{}

\maketitle

\begin{abstract}
				We show that the $A$-optimal design optimization problem over $m$ design points in $\R^n$ is equivalent to minimizing a quadratic function plus a group lasso sparsity inducing term over $n\times m$ real matrices. This observation allows to describe several new algorithms for $A$-optimal design based on splitting and block coordinate decomposition. These techniques are well known and proved powerful to treat large scale problems in machine learning and signal processing communities. The proposed algorithms come with rigorous convergence guaranties and convergence rate estimate stemming from the optimization literature. Performances are illustrated on synthetic benchmarks and compared to existing methods for solving the optimal design problem.

\end{abstract}

\section{Introduction}
We consider an optimal experimental design problem of the form
\begin{equation}\label{theBasicProblem}
  \underset{\vec{w} \in \Delta}{\vec{\operatorname{minimize}}}\ \ \Phi_{A_K} \left( \Sigma^{-1} + \frac{N}{\sigma^2} \sum_{i=1}^m w_i \aa_i \aa_i^T
\right),
\end{equation}
where $\Phi_{A_K}(M)=\trace K^T M^{-1} K$ is the criterion of $A_K$-optimality for some matrix $K\in\R^{n \times r}$ depending on the quantity to be estimated,
$\Sigma$ is a known positive definite matrix, the constants $N$, $\sigma$ and the vectors $\aa_i \in\R^n$, $i=1,\ldots,m,$
are known,
and $\Delta:=\{\vec{w}\in\R^m:\ \vec{w}\geq\vec{0},\ \sum_{i=1}^m w_i=1\}$ is the probability simplex.
This problem arises in linear regression models with
a finite design space, which we identify with  $[m]:=\{1,...,m\}$,  in which independent trials at the $i$th design points yield random measurements $Y_i$, satisfying
$\mathbb{E}[Y_i] = \aa_i^T\vec{\theta}$, $\mathbb{V}[Y_i]=\sigma^2$, for all $i\in[m]$.
In addition, a prior noisy
observation $\tth_0$ of the unknown parameter $\tth\in\R^n$ is available,
with variance-covariance matrix $\mathbb{V}[\tth_0]=\Sigma$ and expectation $\mathbb{E}[\tth_0] = \tth$. 
Then, Problem~\eqref{theBasicProblem} can be interpreted as
selecting the \emph{optimal} fraction $w_i$ of a total number $N$ of trials to perform on the $i$th design point
(the meaning of \emph{optimal} will be detailed in the next section).

This problem was first introduced in~\cite{DD76} under the name \emph{$\psi$-optimality},
and studied in detail by Chaloner~\cite{Cha84}, who observed that this problem could also be called \emph{Bayes A-optimality},
a name still used in the literature. 
Nevertheless, Bayes-optimal designs can also be used in a non-Bayesian context,
when the experimenter is committed to a first batch of trials, and need to select an additional batch of $N$ trials, cf.~\cite{Cha84}.

\medskip
We should observe that
Problem~\eqref{theBasicProblem} is in fact the continuous relaxation of the following discrete problem, which we call \emph{$N$-exact Bayes $A_K$-optimal design}:
\begin{equation}\label{theBasicProblem_discrete}
  \underset{\vec{n}\in\Delta_N}{\vec{\operatorname{minimize}}}\ \ \Phi_{A_K} \left( \Sigma^{-1} + \frac{1}{\sigma^2} \sum_{i=1}^m n_i \aa_i \aa_i^T
\right),
\end{equation}
where $\Delta_N:=\{\vec{n}\in\mathbb{Z}_{\geq 0}^m:\ \sum_{i=1}^m n_i=N\}$ is the standard discrete $N-$simplex,
and $n_i$ represents the number of trials to perform at the $i$th design point.
While Problem~\eqref{theBasicProblem_discrete} is of immediate relevance for the experimenter,
this problem has a hard combinatorial structure;
in particular, 
it contains as a special case the problem of exact $\vec{c}-$optimality, which was proved to be NP-hard in~\cite{CH12}.
Therefore, it is almost impossible to certify global optimality of a design $\vec{n}$,
except for small instances, when a mixed integer second order cone programming solver can be used~\cite{SH15}.
To overcome this issue, the classical machinery of \emph{approximate design theory} proposes to
introduce a continuous variable $w_i=\frac{n_i}{N}$ and to relax the integer constraints ``$N w_i\in\mathbb{Z}$'',
which leads to the convex optimization problem~\eqref{theBasicProblem}.
In practice, the solution of Problem~\eqref{theBasicProblem} gives a lower bound on
the optimal value of~\eqref{theBasicProblem_discrete}. This can be used to
ascertain the quality of an exact design~$\vec{n}$, which
can typically be computed by using heuristic methods,
such as exchange algorithms (see, e.g.~\cite{AD92}) or, as recently proposed, with particle swarm optimization~\cite{LMW18}.
Alternatively, rounding methods
can be used to turn an approximate design~$\vec{w}^*$ (i.e., a solution to Problem~\eqref{theBasicProblem}) into a good exact design~$\vec{n}\in\Delta_N$,
which works particularly well when the total number $N$ of trials is large~\cite{PR92}.
For more details on the subject, we refer the reader to
the monographs of Fedorov~\cite{Fed72}
or Pukelsheim~\cite{Puk93}.

\bigskip
Many different approaches have been proposed to solve Problem~\eqref{theBasicProblem}.
The traditional methods are the Fedorov-Wynn type vertex-direction
algorithms~\cite{Fed72,Wyn70}
and the closely related vertex exchange methods~\cite{Boh86},
the multiplicative weight update algorithms~\cite{STT78,Yu10a},
and interior point methods based on semidefinite programming~\cite{FL00} or second-order cone programming~\cite{Sag11} formulations.
Recent progress in this area has been obtained by employing hybrid methods that alternate between steps of the aforementioned algorithms
(the cocktail algorithm~\cite{Yu11}), or by using randomization~\cite{HFR18}.

\paragraph{Contribution and Organization.}

The main contribution of this article is a new reformulation of Problem~\eqref{theBasicProblem}
as a convex, unconstrained optimization problem, which
brings to light a strong connection with the well-studied problem of \emph{group lasso regression}~\cite{yuan2006model}.
The particular structure of the new formulation also suggests algorithmic ideas based on proximal decomposition methods,
which already proved to be very useful in machine-learning and signal processing applications~\cite{beck2009fast,combettes2011proximal,bach2012optimization}.
An appealing property of these methods is that they come with rigorous convergence guaranties, and yield sparse iterates very quickly, corresponding
to designs with only a few support points.

The rest of this paper is organized as follows. In Section~\ref{sec:ProblemReformulation} we give more background on 
Problem~\eqref{theBasicProblem}, and show how this problem can be reformulated as an unconstrained convex optimization
problem involving a \emph{squared group lasso penalty}. Then,
we characterize the proximity operator of
this penalty in Section~\ref{sec:convex}. This makes it possible to
use a new class of algorithms, described in Section~\ref{sec:algos}, to solve the reformulated problem.
Finally, Section~\ref{sec:experiments}
presents some numerical experiments comparing performances of the proposed algorithm to existing approaches.


\section{Problem Reformulation} \label{sec:ProblemReformulation}

\subsection{The Bayes $A_K-$optimal design problem}
For the sake of completeness, we first explain the derivation of Problem~\eqref{theBasicProblem_discrete} and its relaxation
for approximate designs, Problem~\eqref{theBasicProblem}.
The experimental design is specified by a vector $\vec{n}=(n_1,\ldots,n_m)\in\mathbb{Z}_{\geq 0}^m$,
which indicates the number of replications at the $i$th design point.
Specifically, we obtain random observations
$$
y_{ij} = \aa_i^T\vec{\theta} + \epsilon_{ij},\quad \forall i\in[m], \forall j\in [n_i],
$$
where the measurements are unbiased (i.e., $\E[\epsilon_{ij}]=0$),
uncorrelated (i.e., $(i,j)\neq (k,\ell) \implies \E[\epsilon_{ij} \epsilon_{k\ell}]=0$),
and the variance is known: $\E[\epsilon_{ij}^2]=\sigma^2$.
We further assume that the experimental resources are limited by a \emph{budget} on the total number $N$ of trials, 
that is, $\sum_{i=1}^m n_i=N$ must hold.

\medskip
Denote by $\yy$ the vector of $\R^m$ with the averaged
observations at each location, that is, $y_i = \frac{1}{n_i}\sum_{j=1}^{n_i} y_{ij}$
(and $y_i$ can be set to some arbitrary constant whenever $n_i=0$).
Then, in vector notation, we have
$$\yy = A \tth + \eeps,$$
where $A=[\aa_1,\ldots,\aa_m]^T\in\R^{m\times n}$,
and the averaged random vector $\vec{\epsilon}$ 
with elements $\epsilon_i = \frac{1}{n_i}\sum_{j=1}^{n_i} \epsilon_{ij}$
satisfies $\E[\vec{\epsilon}]=\vec{0}$, $\V[\vec{\epsilon}]=\E[\vec{\epsilon}\vec{\epsilon}^T] = \sigma^2\Diag(\vec{n})^{-1}.
$
(We adopt the convention $\frac{1}{0}=+\infty$, so $n_i=0$ implies that $\epsilon_i$ has an
infinite variance, which is consistent with the fact that $y_i$ is basically unobserved.) Further, we recall
that we have a prior observation $\tth_0=\tth+\vec{\eta}$, for some random vector $\vec{\eta}\in\R^n$ satisfying $\E[\vec{\eta}]=\vec{0},\ \E[\vec{\eta}\vec{\eta}^T]=\Sigma$, and $\E[\vec{\eta}\vec{\epsilon}^T]=0$. 

\bigskip
We know from the Gauss Markov theorem (see, e.g.~\cite{Puk93}) that
the best linear unbiased estimator (BLUE) for $\tth$
solves the least squares problem
$$
\underset{\tth \in \R^n}{\vec{\operatorname{minimize}}}\quad \frac{1}{\sigma^2} (A\tth-\yy)^T \Diag(\nn) (A\tth-\yy) + (\tth-\tth_0)^T\Sigma^{-1}(\tth-\tth_0),
$$
which admits the closed-form solution
$
\hat{\tth}:=M(\nn)^{-1} (A^T \Diag(\nn) \frac{\yy}{\sigma^2} + \Sigma^{-1} \tth_0),
$
where
$
M(\nn):= \frac{1}{\sigma^2} A^T \Diag(\nn) A + \Sigma^{-1} 
$
is the information matrix of the design. For the remaining of this paper, we focus on the \emph{approximate design} problem. As explained in the introduction,
this simply means that we introduce the continuous variable $\vec{w}=\frac{1}{N}\vec{n}\in\Delta$, and that we ignore
the restriction that $N w_i$ should be integer. So we define the (approximate) information matrix, by $M_N(\ww):= M(N\, \vec{w})$, $\forall \vec{w}\in\Delta$.
For the sake of notation, we find convenient to introduce the symbol $\sigma_N^2 = \frac{\sigma^2}{N}$, so 
$$M_N(\vec{w})= \frac{1}{\sigma_N^2} A^T \Diag(\ww) A + \Sigma^{-1}= \frac{1}{\sigma_N^2} \sum_{i=1}^m w_i \aa_i \aa_i^T + \Sigma^{-1} .$$
The Bayes $A-$optimal design problem is to select find $\ww$ minimizing $\Phi_A(M_N(\ww))$, where the criterion of $A-$optimality is $\Phi_A: M\mapsto \trace M^{-1}.$
Geometrically, this corresponds to minimizing the diagonal of the bounding box of 
confidence ellipsoids for $\hat{\vec{\theta}}$ (provided $\vec{\epsilon}$ is normally distributed), cf.~\cite{Sag11}.
Also, note that we recover the standard (non-Bayesian) A-optimal design problem when no prior is available, i.e., $\Sigma^{-1}\to 0$.


\medskip
More generally, the criterion of $A_K$-optimality is defined by
$$
\Phi_{A_K}: \mathbb{S}_{++}^n \to \R,\quad  M\mapsto \trace K^T M^{-1} K
$$
for some matrix $K\in\R^{m\times r}$. Clearly, the standard criterion of $A-$optimality is a particular case of $\Phi_{A_K}$, obtained by setting $K$ to the identity matrix.
Now, assume the experimenter wants to estimate a vector $\zzh=K^T\vec{\theta}$ for some matrix $K\in\R^{n\times r}$. Then, the BLUE for $\zzh$ is $\hat{\zzh}=K^T\hat{\tth}$
 and has variance-covariance matrix $\V[\hat{\zzh}]=K^T M_N(\vec{w})^{-1} K$.
Hence, we have $\Phi_{A_K}(M_N(\vec{w}))=\sum_{i=1}^r \V[\hat{\zeta_i}]$,
which shows that a Bayes $A_K$-optimal design (i.e., a design $\vec{w}^*$ solving Problem~\eqref{theBasicProblem})
minimizes the sum of the variances of the BLUE estimator.

We conclude this part by mentioning another common situation that leads to a Problem of the form~\eqref{theBasicProblem_discrete}.
Assume the experimenter wants to \emph{predict} the quantities $\eta(x):=\phi(\xx)^T\tth$, $\forall \xx \in \mathcal{X}$,
 where $\mathcal{X}$ is a compact set and $\phi:\mathcal{X}\to\R^n$ is a continuous map.
 Then, it is well known that $\hat{\eta}(\xx):=\phi(\xx)^T \hat{\vec{\theta}}$ is the best linear unbiased predictor (BLUP) for $\eta(\xx)$, and its variance is 
 $\phi(\xx)^T M_N(\vec{w})^{-1} \phi(\xx)$. If $\mu$ is a measure over $\mathcal{X}$ weighing the interest of the experimenter
 to predict $\eta$ at $\xx\in\mathcal{X}$, a natural criterion to consider is the \emph{integrated mean squared error},
 $\operatorname{IMSE}(\ww):= \int_{\xx\in\mathcal{X}} \phi(\xx)^T M_N(\ww)^{-1} \phi(\xx)\, d\mu(\xx)$\footnote{When $\mu$ is the uniform measure over the design space,
 we point out that the IMSE criterion is sometimes called $I$-optimality, or $IV$-optimality (for integrated variance).}.
 The minimization of $\operatorname{IMSE}(\ww)$
 can be cast as an $A_K$-optimal design problem, because:
 \begin{align*}
 \operatorname{IMSE}(\ww) &= \int_{\xx\in\mathcal{X}} \phi(\xx)^T M_N(\ww)^{-1} \phi(\xx)\, d\mu(\xx)\\
                          &= \trace M_n(\ww)^{-1} KK^T = \Phi_{A_K}(M_N(\ww)),\vspace{-1em}
 \end{align*}
where $KK^T$ is a Cholesky decomposition of the positive symmetric definite matrix $\int_{\xx\in\mathcal{X}} \phi(\xx) \phi(\xx)^T\, d\mu(\xx)$.
We point out that large scale problems involving the minimization of $\operatorname{IMSE}(\ww)$ recently arose for the sequential design of computer experiments
with Gaussian process as a metamodel, when a truncated Karhunen-Lo\`eve expansion is used to approximate the covariance kernel; see~\cite{gauthier2016optimal,gauthier2017convex,SagnolHegeWeiser2016}.

\subsection{Reformulation as an unconstrained convex problem}

Now, take a linear estimator $\hat{\vec{\zeta}}=X\yy+H\tth_0$ of $\zzh=K^T\vec{\theta}$ for some matrices $X \in \R^{r\times m}$ and $H \in \R^{r \times n}$.
This estimator is unbiased if and only if $XA+H=K^T$, 
and we have $\V[\hat{\zzh}]= \sigma_N^2 X \Diag(\ww)^{-1} X^T + H\Sigma H^T$. 
By the Gauss Markov theorem, minimizing over $X$ and $H$ the quantity $\sum_{i=1}^n \V[\hat{\zeta}_i] = \trace \V[\hat{\zzh}]$ such that $\hat{\vec{\zeta}}$ is unbiased, leads to the BLUE estimator,
in which case we have already seen that $\Phi_{A_K}(M_N(\vec{w}))=\sum_{i=1}^r \V[\hat{\zeta_i}]$. Hence, using the computed variance estimate, the Bayes $A_K$-optimal design is obtained by minimizing
further with respect to $\ww$, i.e., it can be obtained by solving
the following optimization problem
\begin{align}
\underset{\ww\in\R^m, X \in \R^{r\times m}, H \in \R^{r \times n}}{\vec{\operatorname{minimize}}} \ &\quad  \trace \sigma_N^2 X \Diag(\ww)^{-1} X^T + H\Sigma H^T \label{AOPT-SOCP}\\
s.t. \ & \quad XA+H=K^T\nonumber, \quad \ww\geq \vec{0},\quad  \sum_{i=1}^m w_i=1. \nonumber
\end{align}

The objective function is convex, as it can be written as
$\|H \Sigma^{1/2}\|_F^2 + \sigma_N^2 \sum_i \frac{\|\vec{x}_i\|^2}{w_i}$,
where $\vec{x}_i$ is the $i$th column of $X\in\R^{r\times m}$,
which is the sum of a convex quadratic and the perspective functions $(\vec{x}_i,w_i)\mapsto\frac{\|\vec{x}_i\|^2}{w_i}$
of  $\vec{x}_i\mapsto \|\vec{x}_i\|$; see~\cite{BV04}.
We also point out that this problem can be reformulated as a second order cone program (SOCP); see~\cite{Sag11}.

For a fixed $X$,
consider the function $J\colon \vec{w}\mapsto \sum_{i=1}^m \frac{\|\vec{x}_i\|^2}{w_i}$ from $\R_+^m$ to $\R \cup +\infty$. We use the convention that $\|\vec{x}_i\| / 0 = 0$ whenever
$\|\vec{x}_i\| = 0$ for $i = 1,\ldots, m$ which amounts to sum over indices with nonzero numerators: $J\colon \vec{w}\mapsto \sum_{i: \|\vec{x}_i\| > 0} \frac{\|\vec{x}_i\|^2}{w_i}$ and ensures that $J$ is well defined.
We also assume that $\sum_i \|\vec{x}_i\| > 0$ so that $J$ is not constant.
In this case, $J$ is minimized
over the probability simplex for
$w_i^* = \frac{\|\vec{x}_i\|}{\sum_i \|\vec{x}_i\|}$, $i=1,\ldots,m$.
In other words
\begin{align}
				\label{eq:minSimplex}
				\vec{w}^* \in \arg\min \quad \left\{J(\vec{w}),\quad \mathrm{s.t.} \quad w_i \geq 0,\, i =1 ,\ldots,m,\quad\sum_{i=1}^m w_i = 1  \right\}.
\end{align}
Since $J$ is convex, this can be verified by checking the first order Karush-Kuhn-Tucker (KKT) conditions:
note that $\vec{w}^*$ is feasible for (\ref{eq:minSimplex}) and $F$ is differentiable at $\vec{w}^*$ and for all $i = 1, \ldots, m$,

\begin{align}
				\label{eq:KKTSimplex}
				\left. \frac{\partial J(\vec{w})}{\partial w_i}\right|_{\vec{w}=\vec{w}^*} = 
				\begin{cases}
								-\frac{\|\vec{x}_i\|^2}{w_i^{*2}} =- (\sum_i \|\vec{x}_i\|)^2&\text{ if } w_i^* > 0;\\
								0& \text{ otherwise.}
				\end{cases}
\end{align}
Equation (\ref{eq:KKTSimplex}) is precisely KKT optimality condition at $\vec{w}^*$ for Problem~\eqref{eq:minSimplex} (see \textit{e.g.} \cite[Example 3.4.1]{bertsekas1999nonlinear}).
Plugging the expression of $\vec{w}^*$ into~\eqref{AOPT-SOCP}, we obtain the following problem:
\begin{align}
\underset{X, H}{\vec{\operatorname{minimize}}} \ &\quad  \|H \Sigma^{1/2}\|_F^2 + \sigma_N^2 (\sum_i \|\vec{x}_i\|)^2\qquad
s.t. \ & \quad XA+H=K^T.\nonumber
\end{align}
We can eliminate the variable $H$ from this problem, which leads to an unconstrained, convex optimization problem with a nice structure.
We summarize our findings in the next proposition:
\begin{proposition}\label{prop:SGL}
 Consider the optimization problem
\begin{align}
\rho\quad:=\quad\underset{X}{\vec{\operatorname{min}}}  &\quad  \|(XA-K^T) \Sigma^{1/2}\|_F^2 + \sigma_N^2 (\sum_i \|\vec{x}_i\|)^2.
\label{eq:SGL}
\end{align} 
Then, $\rho$ is equal to the optimal value of Problem~\eqref{theBasicProblem}, and if
$X^*=[\xx_1^*,\ldots,\xx_m^*]$ solves Problem~\eqref{eq:SGL}, then the design defined by $w_i^* = \frac{\|\xx_i^*\|}{\sum_j \|\xx_j^*\|}$
is Bayes $A_K-$optimal.
\end{proposition}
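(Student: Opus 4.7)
The plan is to chain together three reformulations that are essentially already established in the text preceding the statement, namely the Gauss--Markov reformulation that turns \eqref{theBasicProblem} into \eqref{AOPT-SOCP}, the closed-form partial minimization over $\vec{w}$ on the simplex given by \eqref{eq:minSimplex}--\eqref{eq:KKTSimplex}, and finally the elimination of the auxiliary variable $H$ via the unbiasedness constraint.

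First, I would argue that Problem~\eqref{AOPT-SOCP} has the same optimal value as \eqref{theBasicProblem}. For fixed $\vec{w}\in\Delta$ with $\vec{w}>\vec{0}$, minimizing $\trace(\sigma_N^2 X\Diag(\vec{w})^{-1} X^T + H\Sigma H^T)$ subject to $XA+H=K^T$ is exactly the Gauss--Markov problem whose optimum is $\Phi_{A_K}(M_N(\vec{w}))=\trace K^T M_N(\vec{w})^{-1} K$, as explained just before \eqref{AOPT-SOCP}. Taking the infimum over $\vec{w}\in\Delta$ on both sides identifies the two optimal values; boundary points of $\Delta$ with some $w_i=0$ can be handled by the usual $0/0$ convention (or a limiting argument) since the objective of \eqref{AOPT-SOCP} is lower semicontinuous in $(X,\vec{w})$ under this convention, which is exactly the convention used to define $J$ right before \eqref{eq:minSimplex}.

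Next, I would perform the partial minimization over $\vec{w}$ inside \eqref{AOPT-SOCP} with $X$ and $H$ held fixed. Only the term $\sigma_N^2 \sum_i \|\vec{x}_i\|^2/w_i = \sigma_N^2 J(\vec{w})$ depends on $\vec{w}$, and the minimization of $J$ over the simplex has been solved just above the proposition: by \eqref{eq:minSimplex}--\eqref{eq:KKTSimplex}, the minimum is attained at $w_i^\star = \|\vec{x}_i\|/\sum_j \|\vec{x}_j\|$ and equals $(\sum_i \|\vec{x}_i\|)^2$ (assuming $\sum_i\|\vec{x}_i\|>0$; the degenerate case $X=0$, for which $J\equiv 0$, is trivial and consistent with taking any $\vec{w}\in\Delta$). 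Substituting back in \eqref{AOPT-SOCP} collapses the problem to
\[
\min_{X,H}\ \|H\Sigma^{1/2}\|_F^2 + \sigma_N^2\Bigl(\sum_i\|\vec{x}_i\|\Bigr)^2\quad\text{s.t.}\quad XA+H=K^T.
\]

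Finally, I would use the linear equality constraint to eliminate $H=K^T-XA$, which is always feasible. This yields exactly \eqref{eq:SGL}, proving $\rho$ equals the optimal value of \eqref{theBasicProblem}. For the design recovery statement, given a minimizer $X^\star$ of \eqref{eq:SGL}, setting $H^\star=K^T-X^\star A$ and $w_i^\star = \|\vec{x}_i^\star\|/\sum_j \|\vec{x}_j^\star\|$ reproduces, by construction, an optimal triple $(X^\star,H^\star,\vec{w}^\star)$ of \eqref{AOPT-SOCP}, whose $\vec{w}$-component is therefore Bayes $A_K$-optimal. The only subtle point worth spelling out is why the optimum of \eqref{eq:SGL} cannot be attained with $\sum_i\|\vec{x}_i^\star\|=0$: this would force $K^T=H^\star \Sigma^{-1}\Sigma$, reducing \eqref{AOPT-SOCP} to the prior-only estimator, and one checks directly that the corresponding objective value coincides with the limit as $\vec{w}$ is taken arbitrary in $\Delta$, so the formula for $\vec{w}^\star$ is vacuous and any $\vec{w}\in\Delta$ is optimal, consistently with the convention used to define $J$. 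The only real obstacle in the argument is this careful bookkeeping of degenerate and boundary cases; the algebraic substitutions are routine.
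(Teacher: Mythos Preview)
Your proposal is correct and follows essentially the same route as the paper: the derivation preceding the proposition already chains Gauss--Markov (yielding \eqref{AOPT-SOCP}), the closed-form simplex minimization \eqref{eq:minSimplex}--\eqref{eq:KKTSimplex}, and the elimination of $H$, and you have reproduced exactly that sequence. Your additional attention to boundary and degenerate cases ($w_i=0$, $X=0$) is a welcome bit of care that the paper leaves implicit.
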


%
If the square was removed from $(\sum_i \|\xx_i \|)^2$, this last term would be similar to a group lasso penalty~\cite{yuan2006model}. From a practical perspective, the main interest of this reformulation is that it paves the way toward the use of well established first order methods to tackle such problems \cite{beck2009fast,combettes2011proximal,bach2012optimization}.

\bigskip
Interestingly, the idea of using a group lasso to design experiments has already been proposed in~\cite{TM13}. However, this paper 
justified the group lasso approach heuristically, in order to select the support points of an exact design.
Indeed, group lasso regression was designed to recover an approximate solution of an equation 
of the form $\sum_i A_i' \xx_i \simeq \yy'$ with only a small number of nonzero blocks $\xx_i$.
It is widely known that optimal designs often have a small number of support points,
and hence correspond to an estimator $\hat{\vec{\theta}}=X\vec{y}$ with many columns of $X$ equal to $\vec{0}$. Therefore,
group lasso regression can be used to find sparse estimators that satisfy
approximately the unbiasedness property: $XA\simeq K^T$.
The result of Proposition~\eqref{prop:SGL}
shows that in fact, one obtains
an exact reformulation of the Bayes $A_K$-optimal design problem by squaring the penalty.

\section{Convex analysis of the squared group lasso penalty} \label{sec:convex}

Throughout the rest of this article, we set for all $X \in \R^{r \times m}$ where, for each $i=1,\ldots, m$,  $\vec{x}_i \in\R^n$ is the $i$th column of $X$:
\begin{itemize}
				\item $f \colon X \mapsto \|(XA-K^T)\Sigma^{1/2}\|_F^2$.
				\item The norm $\Omega \colon X \mapsto \sum_{i=1}^m \|\vec{x}_i\|$ and its dual norm, $\Omega^* \colon X \mapsto \max_{i=1\ldots m} \|\vec{x}_i\|$.
				\item $g \colon \R^{r\times m} \mapsto \R$ with $g(X) = \frac{1}{2} \Omega(X)^2$.
\end{itemize}
We use the usual Euclidean scalar product on matrices. With these notations, problem (\ref{eq:SGL}) may be rewritten as
 				\begin{align}
 								\mathrm{min}_X \quad& F(X):=\ f(X) + 2\, \sigma_N^2\, g(X)
 								\label{eq:SGLbis}
 				\end{align}
Note that the function $g$ is convex and that the outer square destroys the separability of the inner sum in $g$, unlike standard group lasso penalty. 
This leads to non trivial optimization developments.
%
%
%
The reader is referred to \cite{rockafellar1970convex,borwein2010convex} for detailed exposition of convex analysis related material.
\begin{lemma}[Subgradient and conjugate]
				Let $g \colon \R^{r\times m} \mapsto \R$ be such that $g(X) = \frac{1}{2} \left( \sum_{i=1}^m \|\vec{x}_i\| \right)^2$ where $\vec{x}_i$ is the $i$th column of $X$. Then we have the following formula for the subgradient and the Legendre transform of $g$ denoted by $g^*$:
				\begin{align*}
								&\forall X \in \R^{r\times m}, \, \partial g (X) = \left( \sum_{i=1}^m \|\vec{x}_i\| \right)\left[ \vec{v}_1 \vec{v}_2 \ldots \vec{v}_m \right],\, \vec{v}_i \in \partial \|\vec{x}_i\|,\, i=1,\ldots,m.\\
								&\forall Z \in \R^{r\times m},\, g^*(Z) = \frac{1}{2} \max_{i=1,\ldots,m}\left\{\|\vec{z}_i\|^2  \right\}.
				\end{align*}
				\label{lem:subgradientG}
\end{lemma}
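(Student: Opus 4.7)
The plan is to treat the two formulas separately, noting that both rely on the fact that $\Omega$ is a norm on $\R^{r\times m}$ (specifically, the $\ell_{2,1}$-type norm that sums the Euclidean norms of columns).

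For the subgradient, I would write $g = \varphi \circ \Omega$, where $\varphi\colon t\mapsto \tfrac{1}{2}t^2$. Since $\varphi$ is differentiable, convex, and nondecreasing on $[0,\infty)$, while $\Omega\geq 0$ is convex, the standard chain rule for subdifferentials (see e.g.\ \cite{rockafellar1970convex}) yields $\partial g(X) = \varphi'(\Omega(X))\,\partial\Omega(X) = \Omega(X)\,\partial\Omega(X)$. Next, because $\Omega(X) = \sum_{i=1}^m h_i(X)$, where $h_i(X) := \|\vec{x}_i\|$ depends only on the $i$th column, the subdifferential splits as $\partial\Omega(X) = \prod_{i=1}^m \partial h_i(X)$ in the obvious column-wise sense: $V\in\partial\Omega(X)$ iff its $i$th column $\vec{v}_i$ lies in $\partial\|\cdot\|(\vec{x}_i)$ for every $i$. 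Multiplying by the scalar $\Omega(X) = \sum_i\|\vec{x}_i\|$ yields exactly the stated formula.

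For the conjugate, I would invoke the classical identity that for any norm $\|\cdot\|$ on a Euclidean space with dual norm $\|\cdot\|_*$, one has $\bigl(\tfrac{1}{2}\|\cdot\|^2\bigr)^* = \tfrac{1}{2}\|\cdot\|_*^2$. This is a textbook result (\cite{rockafellar1970convex,borwein2010convex}) that follows by maximizing $\langle Z,X\rangle - \tfrac{1}{2}\|X\|^2$ along rays $X = t U$ with $\|U\|=1$, then optimizing over $U$ to recover $\|Z\|_*$. Applied to $\Omega$, this reduces the problem to identifying its dual norm. A direct calculation gives
\begin{align*}
\Omega^*(Z) \;=\; \sup_{\Omega(X)\leq 1}\,\sum_{i=1}^m \langle \vec{z}_i,\vec{x}_i\rangle \;\leq\; \sup_{\sum_i \|\vec{x}_i\|\leq 1}\,\sum_{i=1}^m \|\vec{z}_i\|\,\|\vec{x}_i\| \;=\; \max_{i=1,\ldots,m}\|\vec{z}_i\|,
\end{align*}
by Cauchy--Schwarz on each term and then optimizing the linear functional $\sum_i \|\vec{z}_i\|\,t_i$ over the simplex $\{t\geq 0:\sum t_i\leq 1\}$. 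Equality is attained by concentrating the mass on an index $i^*\in\arg\max_i\|\vec{z}_i\|$ and choosing $\vec{x}_{i^*} = \vec{z}_{i^*}/\|\vec{z}_{i^*}\|$ (and $\vec{x}_i=\vec{0}$ otherwise). Hence $\Omega^*(Z) = \max_i\|\vec{z}_i\|$, which is exactly the dual norm formula anticipated in the notation introduced at the start of Section~\ref{sec:convex}, and substituting into $g^* = \tfrac{1}{2}(\Omega^*)^2$ gives the stated expression.

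Neither step should present a serious obstacle. The only subtle point is justifying the factorization of $\partial\Omega(X)$ into a Cartesian product of column-wise subdifferentials; this can be done either by direct verification from the definition of subgradient (decoupling in columns because $\Omega$ is a separable sum) or by invoking the Moreau--Rockafellar sum rule, whose regularity hypotheses are trivially met here since each $h_i$ is finite and continuous on the whole space. Everything else is a straightforward application of norm duality and the elementary conjugacy formula for the squared norm.
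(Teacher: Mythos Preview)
Your proposal is correct. Both you and the paper exploit the structure $g=\tfrac12\Omega^2$ with $\Omega$ a norm, but the specific tools differ. The paper first proves the conjugate by a direct Fenchel-inequality bound, $\langle X,Z\rangle - g(X) \le \Omega^*(Z)\Omega(X)-\tfrac12\Omega(X)^2\le \tfrac12\Omega^*(Z)^2$, and exhibits a maximizer; it then \emph{derives} the subgradient from the conjugate via the Fenchel--Young equality characterization ($Z\in\partial g(X)\Leftrightarrow g(X)+g^*(Z)=\langle X,Z\rangle$), checking that every $Z\in\Omega(X)\,\partial\Omega(X)$ attains equality. You instead handle the two parts independently: the chain rule for $\varphi\circ\Omega$ gives the subgradient directly (and immediately as a set equality, whereas the paper's Fenchel--Young step as written only verifies the inclusion $\Omega(X)\,\partial\Omega(X)\subseteq\partial g(X)$), while the conjugate comes from the textbook identity $(\tfrac12\|\cdot\|^2)^*=\tfrac12\|\cdot\|_*^2$ together with an explicit computation of $\Omega^*$, which the paper simply states at the top of Section~\ref{sec:convex} without derivation. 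Your route is slightly more self-contained; the paper's has the virtue of linking the two formulas through a single duality argument.
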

\begin{proof}
				We mostly follow \cite{bach2012optimization} and provide detailed arguments. We set $\Omega \colon X \mapsto \sum_{i=1}^m \|\vec{x}_i\|$ which is a norm. Its dual norm is $\Omega^* \colon Z \mapsto \max_{i=1\ldots m} \|\vec{z}_i\|$. Fix any $Z \in \R^{r\times m}$, we have for any $X \in \R^{r\times m}$,
				\begin{align*}
								\left\langle X,Z \right\rangle - g(X) &\leq \Omega^*(Z)\Omega(X) - \frac{1}{2} \Omega(X)^2 \leq \frac{1}{2} \Omega^*(Z)^2.
				\end{align*}
				Setting $X = \Omega^*(Z) \partial \Omega^*(Z)$, we obtain $\Omega(X) = \Omega^*(Z)$ and $\left\langle Z, X\right\rangle = \Omega^*(Z)^2$ so that the above holds with equality. This entails that $g^* = \frac{1}{2} (\Omega^*)^2$ which is precisely the claimed formula for the conjugate function. Now symmetrically, for any fixed $X \in \R^{r\times m}$, setting $Z = \Omega(X) \partial \Omega(X)$ we obtain $\Omega^*(Z) = \Omega(X)$ and $\left\langle Z, X\right\rangle = \Omega(X)^2 =  \frac{1}{2}\left( \Omega(X)^2 + \Omega^*(Z)^2 \right)$ which shows by \cite[Theorem 23.5]{rockafellar1970convex} that $Z \in \partial g(X)$. The claimed form of the subgradient follows because $\Omega$ has a structure of separable sum, see \cite[Theorem 23.8]{rockafellar1970convex}.
\end{proof}
Given $t> 0$, the following lemma describes how to compute the proximity operator of $X\mapsto t\,g(X)$:
$$
\operatorname{prox}_{tg}(V):=\arg\!\min_{X}\, t\cdot g(X) + \frac{1}{2} \|X-V\|_F^2.
$$

%

\begin{algorithm}[t]
\caption{($\operatorname{prox}$-operator)}
\label{alg:prox}
\begin{algorithmic}[1]                    
    \alginput $t,V$
    \algoutput $\operatorname{prox}_{tg}(V)$
    \medskip
    \myState{Order the columns of $V$ by decreasing order of norm, such that $\|\vv_1\|  \geq \cdots \geq \|\vv_m\|$ (store the corresponding permutation)}
    \myState{Make a binary search to find the largest $k\leq m$ such that
$\|\vv_k\| \geq \frac{t}{tk+1} \sum_{i=1}^k \|\vv_i\|.$}
    \myState{Set $\xx_i = \left( 1-\frac{t}{tk+1} \sum_{j=1}^k \frac{\|\vv_j\|}{\|\vv_i\|} \right) \vec{v}_i$, for $i=1,\ldots,k$.} 
    \myState{Set $X=[\xx_1,\ldots,\xx_k,\vec{0},\ldots,\vec{0}]\in\R^{r\times m}$, and permute the columns according to the inverse permutation obtained from the first step.}
    \myState{\Return $X$}
\end{algorithmic}
\end{algorithm}
\begin{lemma}[Proximity operator]
				Let $V \in \R^{r \times m}$ and $\vec{v}_i \in \R^n$ be its columns for $i=1,\ldots,m$ and $t > 0$. Then Algorithm \ref{alg:prox} computes $\operatorname{prox}_{tg}(V)$.
				\label{lem:proxG}
\end{lemma}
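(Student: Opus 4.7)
The plan is to reduce the proximity operator problem to a scalar optimization by exploiting rotational invariance. Writing the objective as $\frac{t}{2}\bigl(\sum_i \|\vec{x}_i\|\bigr)^2 + \frac{1}{2}\sum_i \|\vec{x}_i - \vec{v}_i\|^2$, for fixed magnitudes $s_i := \|\vec{x}_i\|$ the second term is minimized direction-wise by taking $\vec{x}_i$ parallel to $\vec{v}_i$ when $\vec{v}_i \neq \vec{0}$ (and $\vec{x}_i = \vec{0}$ otherwise), which collapses it to $\frac{1}{2}\sum_i(s_i - a_i)^2$ with $a_i := \|\vec{v}_i\|$. So the problem reduces to
$$
\min_{s_i \geq 0}\ \frac{t}{2}\Bigl(\sum_{i=1}^m s_i\Bigr)^2 + \frac{1}{2}\sum_{i=1}^m (s_i - a_i)^2.
$$

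Next I would write the KKT conditions for this convex problem: with $S := \sum_i s_i$ and multipliers $\lambda_i \geq 0$ associated to $s_i \geq 0$, stationarity gives $tS + s_i - a_i = \lambda_i$ with $\lambda_i s_i = 0$, hence $s_i = (a_i - tS)_+$. After sorting so that $a_1 \geq \cdots \geq a_m$, the support of the optimizer must have the form $\{1,\ldots,k\}$ for some $k$; summing the positivity relations over this index set yields the closed-form
$$
S = S_k := \frac{1}{1+tk} \sum_{i=1}^k a_i.
$$
The two KKT-sign conditions then read $a_k \geq tS_k$ and $a_{k+1} \leq tS_k$ (with $a_{m+1} := 0$), and substituting $s_i = a_i - tS_k$ back into $\vec{x}_i = s_i\vec{v}_i/a_i$ reproduces exactly the formula $\vec{x}_i = \bigl(1 - \tfrac{t}{1+tk}\sum_{j=1}^k \|\vec{v}_j\|/\|\vec{v}_i\|\bigr)\vec{v}_i$ of Step 3 of the algorithm.

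The main obstacle, and the crux of correctness of Step 2, is justifying the binary search: that the ``largest $k$ with $a_k \geq tS_k$'' is well-defined and that for this $k$ the second KKT condition $a_{k+1} \leq tS_k$ holds automatically. The key observation is that $c(k) := a_k(1+tk) - t\sum_{i=1}^k a_i$ satisfies $c(k+1) - c(k) = (1+tk)(a_{k+1} - a_k) \leq 0$, so $c$ is non-increasing in $k$. This monotonicity ensures (i) the set $\{k : c(k) \geq 0\}$ is a prefix $\{1,\ldots,k^*\}$, making it recoverable by binary search, and (ii) at $k^*$ the inequality $c(k^*+1) \leq 0$ rearranges exactly to $a_{k^*+1} \leq tS_{k^*}$, which is the remaining KKT sign condition. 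Finally, one checks that $k^* \geq 1$ always holds whenever some $\vec{v}_i \neq \vec{0}$ (since $c(1) = a_1 \geq 0$), and that when all columns vanish the algorithm trivially returns the zero matrix. Sorting and then un-sorting at the end (Steps 1 and 4) is a harmless reindexing that does not affect the value returned.
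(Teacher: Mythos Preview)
Your argument is correct and takes a genuinely different route from the paper's proof. The paper verifies the optimality condition $(V-X)/t\in\partial g(X)$ directly, using the explicit description of $\partial g$ from Lemma~\ref{lem:subgradientG} and a case analysis on whether $i\leq k$ or $i>k$; it plugs in the proposed $X$ and checks that each column of $(V-X)/t$ lies in $\Omega(X)\,\partial\|\vec{x}_i\|$. You instead reduce the matrix problem to the scalar problem $\min_{s\geq 0}\,\tfrac{t}{2}(\sum_i s_i)^2+\tfrac12\sum_i(s_i-a_i)^2$ via rotational invariance, solve its KKT system $s_i=(a_i-tS)_+$, and derive the formula constructively. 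The payoff of your approach is twofold: it explains where the closed form comes from rather than merely checking it, and---more importantly---your monotonicity computation $c(k+1)-c(k)=(1+tk)(a_{k+1}-a_k)\leq 0$ actually justifies Step~2 of the algorithm, showing that the predicate ``$\|\vec v_k\|\geq\tfrac{t}{tk+1}\sum_{j\leq k}\|\vec v_j\|$'' is monotone in $k$ and hence amenable to binary search. The paper's proof asserts that the largest such $k$ exists (since the condition holds at $k=1$) but does not verify this monotonicity, so strictly speaking it certifies that the returned $X$ is the proximal point but not that a binary search finds the right $k$. Conversely, the paper's verification is shorter and leverages Lemma~\ref{lem:subgradientG}, whereas your route is self-contained.
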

\begin{proof}
				First note that $k$ is well defined since the condition obviously holds for $k = 1$. Furthermore, for all $i \leq k$ we have $\|\vec{v}_i\| \geq \|\vec{v}_k\| \geq \frac{t}{tk+1} \sum_{j=1}^k \|\vv_j\|$. Note also that the proposed definition for $\vec{x}_i$ ensures that $i > k$ for all $i$ such that $\vec{v}_i = 0$ so that there is no division by $0$ and $\vec{x}_i = 0$ whenever $\vec{v}_i = 0$ . We just need to check that $(V-X)/t \in \partial g(X)$. We have
				\begin{align*}
								\sum_{i=1}^m \|\vec{x}_i\| = \sum_{i =1}^k \|\vec{v}_i\| - \frac{kt}{tk + 1} \sum_{j = 1}^k \|\vec{v}_j\| = \frac{1}{tk + 1} \sum_{j=1}^k \|\vec{v}_j\|.
				\end{align*}
				We now consider several cases.
				\begin{itemize}
								\item If $i > k$ and  $\vec{v}_i = 0$, then $\vec{x}_i = 0$ and $(\vec{v}_i - \vec{x}_i ) / t = 0 \in \partial \|\vec{x}_i\|$.
								\item If $i > k$ and  $\vec{v}_i \neq 0$, then $\vec{x}_i = 0$ and it holds that $\frac{\vec{v}_i}{\|\vec{v}_i\|} \in \partial \|\vec{x}_i\|$ and $\frac{\vec{v}_i - \vec{x}_i}{t} = \frac{ \vec{v}_i }{ \|\vec{v}_i\|} \left( \sum_{i=1}^m  \|\vec{x}_i\| \right)$.
								\item If $i \leq k$, then $\left( 1-\frac{t}{tk+1} \sum_{j = 1}^k \frac{\|\vv_j\|}{\|\vv_i\|} \right) \geq 0$ and $\vec{v}_i \neq 0$ so that $\frac{\vec{v}_i}{\|\vec{v}_i\|} \in \partial \|\vec{x}_i\|$. We also have $\frac{\vec{v}_i - \vec{x}_i}{t} = \frac{ \vec{v}_i }{ \|\vec{v}_i\|} \left( \sum_{i=1}^m  \|\vec{x}_i\| \right)$.
				\end{itemize}
				 This shows that the proposed $X$ satisfies the subdifferential characterization in Lemma \ref{lem:subgradientG} and the result follows.
\end{proof}

\section{Algorithms} \label{sec:algos}

\subsection{Proximal decomposition methods} \label{sec:prox_dec}
In this section we describe convex optimization algorithms dedicated to structured ``smooth plus nonsmooth'' problems with easily computable proximity operator. Further details and historical comments are found in \cite{combettes2011proximal,beck2009fast,bach2012optimization}. 
On the one hand, we have $\nabla f(X) = 2 (XA-K^T)  \Sigma A^T.$
On the other hand, Lemma~\ref{lem:proxG} ensures that $\operatorname{prox}_{tg}(V)$, can be computed by Algorithm~\ref{alg:prox}. These are the building blocks of proximal decomposition algorithms. We describe the backtracking line search variants of the Forward-Backward algorithm and 
FISTA algorithm.
Backtracking line search ensures minimal parameter tuning beyond the initialization. One can use a fixed step size $1/L$ instead, where $L=\mathrm{trace}(A\Sigma A^T)$ is the Lipschitz constant of $\nabla f$. 

\paragraph{Forward-Backward algorithm}: This is the simplest proximal decomposition algorithm. More details can be found in \cite{combettes2011proximal,beck2009fast}.
\begin{algorithm}[t]
\caption{Forward-Backward with Backtracking line search}
\label{alg:FB}
\begin{algorithmic}[1]                    
    \alginput $X_0 \in \R^{r \times m}$, $\eta > 1$, $L_0 > 0$
    \medskip
		\For{$k=1,2\ldots$}{
			\State Find the smallest $i \in \N$ such that, 
			with $\bar{L} = \eta^{i} L_{k-1}$ and $P= \mathrm{prox}_{2 \sigma_N^2g/\bar{L}}\left( X_{k-1} - \frac{1}{\raisebox{-1mm}{{\small $\bar{L}$}}} \nabla f(X_{k-1}) \right)$,
			$$F(P)\leq\quad f(X_{k-1}) + \left\langle\nabla f(X_{k-1}),P- X_{k-1}\right\rangle + 2 \sigma_N^2 g(P) + \frac{\bar{L}}{2}\|P - X_{k-1}\|_F^2$$
		\State Set $L_k = \bar{L}$ and $X_k = P$\;
		\EndFor}
\end{algorithmic}
\end{algorithm}
Known properties for this algorithm include the following:
\begin{itemize}
				\item The sequence $\left( X_k \right)_{k \in \N}$ converges to a solution of problem (\ref{eq:SGLbis}) and for any $X^*$ solution of the problem, the sequence $\left(\|X_k - X^*\|  \right)_{k \in \N}$ is non increasing.
				\item The objective function 
				$F(X_k)$ is monotonically decreasing along the sequence and we have for all $k\in\N$,
								\begin{align*}
												F(X_k) - \rho \leq \frac{\eta L \|X_0 - X^*\|_F^2}{2k},
								\end{align*}
								for any $X^*$ solution to Problem \ref{eq:SGLbis} (see \cite{beck2009fast}). Here $L=\mathrm{trace}(A\Sigma A^T)$ is the Lipschitz constant of the gradient of $f$ (with respect to Frobenius norm).
				\item For all $k \in \N$, we have $L_k \leq \max\left\{ L_0,\eta L \right\}$
\end{itemize}
\paragraph{FISTA acceleration:} It is known since the seminal work of Nesterov \cite{nesterov1983method} that $O(1/k)$ is not optimal for convex optimization with gradient methods. Accelerated methods exist with a faster $O(1/k^2)$ convergence rate. We now describe 
the FISTA algorithm \cite{beck2009fast} which belongs to this family of methods and is applicable to problem (\ref{eq:SGLbis}).
\begin{algorithm}[t]
\caption{Fista with Backtracking line search}
\label{alg:FISTA}
\begin{algorithmic}[1]                    
				\alginput $X_0 \in \R^{r \times m}$, $\eta > 1$, $L_0 > 0$, $Y_1 = X_0$, $t_1 = 1$.
    \medskip
		\For{$k=1,2\ldots$}{
			\State Find the smallest $i \in \N$ such that, with $\bar{L} = \eta^{i} L_{k-1}$ and $P= \mathrm{prox}_{2 \sigma_N^2g/\bar{L}}\left( Y_{k} - \frac{1}{\raisebox{-1mm}{{\small $\bar{L}$}}} \nabla f(Y_{k}) \right)$,
			$$F(P)\leq\quad f(Y_{k}) + \left\langle\nabla f(Y_{k}),P- Y_{k}\right\rangle + 2 \sigma_N^2 g(P) + \frac{\bar{L}}{2}\|P - Y_{k}\|_F^2$$
		\State Set $L_k = \bar{L}$ and $X_k = P$ and $t_{k+1} = \frac{1 + \sqrt{1 + 4 t_k^2}}{2}$ and
		$$Y_{k+1} = X_k + \left( \frac{t_k - 1}{t_{k+1}} \right)\left( X_k - X_{k-1} \right).$$
		\EndFor}
\end{algorithmic}
\end{algorithm}
Contrary to Forward Backward algorithm, FISTA algorithm does not provide a monotonically decreasing sequence of objective values, and convergence of the sequence $(X_k)_k$ is not known yet for this precise version, although it is for very close variants \cite{chambolle2015convergence}. The main feature of FISTA is the following complexity estimate, for any $k \in \N$, 
\begin{align*}
				F(X_k) - \rho \leq \frac{2\eta L \|X_0 - X^*\|_F^2}{(k+1)^2},
\end{align*}
for any $X^*$ solution to Problem \ref{eq:SGLbis} (see \cite{beck2009fast}). Here $L=\mathrm{trace}(A\Sigma A^T)$ is the Lipschitz constant of the gradient of $f$ (with respect to Froebenius norm).

The complexity of one iteration of either Forward-Backward or FISTA algorithm is dominated by the cost of computing $\nabla f(X) = 2(XA - K^T) \Sigma A^T$ which can be done in $O(r \times n \times m)$ operations (this is the cost of multiplication of $X \in \R^{r\times m}$ and $A \in \R^{m\times n}$
and multiplying the result by $\Sigma A^T \in \R^{n \times m}$). For a typical situation with $r = n$, this is $O(n^2 m)$. The cost of computing the proximity operator is negligible.

\subsection{Block coordinate descent} \label{sec:abcd}

An alternative to solve the unconstrained optimization problem~\eqref{eq:SGL}  is to iteratively solve the problem for one particular block $\xx_i$, while keeping all other blocks fixed. This idea is attractive, because optimization over a single block admits a simple closed-form solution, as the following proposition shows.

\begin{proposition}
 Let $i\in[m]$, and let the $\xx_j$'s be a fixed vectors in $\R^n$ ($\forall j\in[m], j\neq i$). We consider 
 the variant of Problem~\eqref{eq:SGL} in which we minimize the criterion with respect to the block of variables $\xx_i$ only, that is:
\begin{align}\label{SGL}
				\mathrm{min}_{\xx_i} \ &\quad h_i(\xx_i):= \|( \xx_i \aa_i^T + R) \Sigma^{1/2}\|_F^2 + \sigma_N^2 (\|\vec{x}_i\| + \beta)^2,
\end{align}
 where $R:=\sum_{j\neq i} \xx_j \aa_j^T-K^T$
 and $\beta = \sum_{j\neq i} \|\xx_j\|$.
 The optimal solution of this problem is given by $\xx_i^*=\vec{0}$ whenever $R \Sigma \aa_i=\vec{0}$ and otherwise,
 $$
 \xx_i^* = -\frac{1}{\aa_i^T  \Sigma \aa_i + \sigma_N^2} \cdot \max\left\{1 -\sigma_N^2 \frac{\beta}{\|R\Sigma \aa_i\|}, 0   \right\} \cdot  R \Sigma \aa_i,
$$
\label{prop:blockMin}
\end{proposition}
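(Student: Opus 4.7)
The plan is to decouple the direction and magnitude of $\xx_i$ and reduce the problem to a one-dimensional convex quadratic minimization with a nonnegativity constraint.

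First, I would expand the Frobenius norm via traces. Using $\|M\Sigma^{1/2}\|_F^2 = \trace(M \Sigma M^T)$ and dropping terms that do not depend on $\xx_i$, one gets
\begin{equation*}
h_i(\xx_i) = (\aa_i^T \Sigma \aa_i)\,\|\xx_i\|^2 + 2\, \xx_i^T R\Sigma \aa_i + \sigma_N^2(\|\xx_i\|+\beta)^2 + \mathrm{const}.
\end{equation*}
Set $\alpha := \aa_i^T \Sigma \aa_i \geq 0$ and $\vec{b} := R\Sigma\aa_i \in \R^r$. The crucial observation is that the dependence of $h_i$ on the direction of $\xx_i$ is entirely contained in the linear term $\xx_i^T \vec{b}$, while every other term depends only on $t := \|\xx_i\|$.

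Next I would handle the direction. In the case $\vec{b} = \vec{0}$, the function $h_i$ depends only on $t \geq 0$ and is clearly nondecreasing on $[0,+\infty)$ (its derivative is $2(\alpha+\sigma_N^2)t + 2\sigma_N^2\beta \geq 0$), so $\xx_i^* = \vec{0}$, which matches the first branch of the claim. Otherwise, for fixed $t \geq 0$, Cauchy--Schwarz gives $\xx_i^T \vec{b} \geq -t\|\vec{b}\|$ with equality attained by $\xx_i = -t\,\vec{b}/\|\vec{b}\|$; so I may plug in this optimal direction.

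It remains to minimize
\begin{equation*}
\phi(t) := (\alpha + \sigma_N^2)\,t^2 + 2(\sigma_N^2\beta - \|\vec{b}\|)\,t + \sigma_N^2\beta^2
\end{equation*}
over $t \geq 0$. Since $\alpha + \sigma_N^2 > 0$, this is a strictly convex quadratic whose unconstrained minimizer is $\tilde{t} = (\|\vec{b}\| - \sigma_N^2\beta)/(\alpha + \sigma_N^2)$. Projecting onto the nonnegative reals yields $t^* = \max\{\|\vec{b}\| - \sigma_N^2\beta,\, 0\}/(\alpha+\sigma_N^2)$. Substituting back into $\xx_i^* = -t^*\,\vec{b}/\|\vec{b}\|$ and factoring $\|\vec{b}\|$ out of the numerator gives the stated formula.

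There is no serious obstacle: once the Frobenius norm is expanded and the direction/magnitude separation is performed, the remaining one-dimensional problem is routine. The only points requiring a little care are the soft-thresholding that arises from the constraint $t \geq 0$ (which produces the $\max\{\cdot,0\}$ factor) and the degenerate case $\vec{b} = \vec{0}$, which must be treated separately to avoid dividing by $\|\vec{b}\|$.
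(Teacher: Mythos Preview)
Your proof is correct. Both you and the paper begin with the same expansion of $h_i$ into $(\aa_i^T\Sigma\aa_i)\|\xx_i\|^2 + 2\xx_i^T R\Sigma\aa_i + \sigma_N^2(\|\xx_i\|+\beta)^2 + \text{const}$, but from there the routes diverge. The paper proceeds by computing the subdifferential $\partial h_i$ (using the sum rule and the known form of $\partial\|\cdot\|$) and then \emph{verifies} that the proposed $\xx_i^*$ satisfies $\vec{0}\in\partial h_i(\xx_i^*)$ through a case analysis. Your argument instead \emph{derives} the minimizer directly: the Cauchy--Schwarz step fixes the optimal direction for each radius $t=\|\xx_i\|$, and the remaining one-dimensional quadratic in $t\geq 0$ is solved in closed form, the nonnegativity constraint producing the soft-threshold $\max\{\cdot,0\}$. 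Your approach is a bit more elementary (no subdifferential calculus is needed) and constructive rather than verificational; the paper's approach, on the other hand, makes the first-order optimality structure explicit, which dovetails with the subgradient description of $g$ used elsewhere in the paper (Lemma~\ref{lem:subgradientG}) and with the convergence argument of Proposition~\ref{prop:convergenceAltMin}.
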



\begin{proof}
 We can rewrite the function to minimize as
$$ h_i(\xx_i) = 
  \aa_i^T  \Sigma \aa_i \|\xx_i\|^2 + 2 \xx_i^T R \Sigma \aa_i + \|R \Sigma^{1/2} \|_F^2 +  \sigma_N^2 (\|\vec{x}_i\| + \beta)^2.$$
	Expanding the square, the subgradient sum rule \cite[Theorem 23.8]{rockafellar1970convex} gives the following expression for the subgradient $\partial (\|\vec{x}\| + \beta)^2 = 2\partial \|\vec{x}\|(\|\vec{x}\| + \beta)$,
	hence the subgradient of $h_i$ has the following form:
$$\partial h_i(\xx_i) = \left\{\begin{array}{ll} 
         \quad \left\{2 \big[ (\aa_i^T  \Sigma \aa_i) \xx_i + R\Sigma\aa_i + \sigma_N^2 (\|\xx_i\| + \beta) \frac{\xx_i}{\|\xx_i\|}  \big] \right\} &\ \text{if }\xx_i\neq\vec{0}; \\[1em]
         \quad \left\{2 \big[  R\Sigma\aa_i + \sigma_N^2 \vec{u}\big]:\   \|\uu\| \leq \beta \right\} & \ \text{otherwise.}
         \end{array}\right.
$$

It remains to show that $\vec{0}\in\partial{h_i}(\xx_i^*)$. If $R\Sigma \vec{a}_i = \vec{0}$ then $\vec{x}_i = 0$ and the statement holds. Assume that $R\Sigma \vec{a}_i\neq\vec{0}$, we distinguish two cases.
\begin{itemize}
 \item If $(1 -\sigma_N^2 \frac{\beta}{\|R\Sigma \aa_i\|})>0$, then
 $$\xx_i^* = -\frac{1}{\aa_i^T  \Sigma \aa_i + \sigma_N^2} \cdot (1 -\sigma_N^2 \frac{\beta}{\|R\Sigma \aa_i\|}) \cdot  R \Sigma \aa_i\neq \vec{0} .$$
Substituting in the expression of $\partial h_i$, easy (though lengthy) calculations shows  that $\partial h(\xx_i^*)=\{\vec{0}\}$.
 
 \item Otherwise, we have $\|R\Sigma \aa_i\| \leq \sigma_N^2 \beta $ and $\xx_i^*=\vec{0}$.
 To see that $\vec{0}\in\partial{h_i}(\xx_i^*)$, we need
 a vector $\vec{u}$ such that $R\Sigma\aa_i + \sigma_N^2 \vec{u}=\vec{0}$ and $\|\uu\|\leq \beta$.
 This works for  $\uu = -\frac{1}{\sigma_N^2} R\Sigma\aa_i$.
\end{itemize}
\end{proof}

\paragraph{Alternating minimization:} Block coordinate methods are wide spread for large scale problems, see for example \cite{wright2015coordinate} for a recent overview. The idea is to update only a subset of variable at each iteration. The choice of the subset could be performed in various ways: at random with replacement, in a cyclic order, using random permutations.
We describe the block minimization algorithm which is well suited for our problem thanks to Proposition \ref{prop:blockMin}.
\begin{algorithm}[t]
\caption{Alternating Block Coordinate Descent}
\label{alg:ABCD}
\begin{algorithmic}[1]                    
		\alginput $X_0 \in \R^{r \times m}$, and denote by $\vec{x}_{i0}$, $i=1,\ldots, m$, its columns 
    \medskip
		\For{$k=1,2\ldots$}{
			\State Choose an integer $i \in \left\{ 1,\ldots,m \right\}$ (see the main text for different possibilities). 
			\State Set $\vec{x}_{ik} = -\frac{1}{\aa_i^T  \Sigma \aa_i + \sigma_N^2} \cdot \max\left\{1 -\sigma_N^2 \frac{\beta}{\|R\Sigma \aa_i\|}, 0   \right\} \cdot  R \Sigma \aa_i$\;
			where $R:=\sum_{j\neq i} \xx_{j(k-1)} \aa_j^T-K^T$ and $\beta = \sum_{j\neq i} \|\xx_{j(k-1)}\|$.
			\State Set $\vec{x}_{jk} = \vec{x}_{j(k-1)}$ for all $j \in \left\{ 1,\ldots, m \right\}$, $j \neq i$. 
		\EndFor}
\end{algorithmic}
\end{algorithm}
Implementing the alternating minimization algorithm requires to keep track of $XA\Sigma \in \R^{r \times n}$ (similarly as for computing $\nabla f$). Keeping track of this quantity when changing a single column can be done in $O(nr)$ operations. A full path through the $m$ columns can be done in $O(nrm)$ operations which is the same as for gradient based methods.

To our knowledge application of this algorithm to a problem of the form of (\ref{eq:SGL}) is new in the optimization literature. Indeed, alternating minimization and more generally block coordinate methods are not convergent in general, their use is limited to smooth problems or problems
with a separable sum structure. This is not the case because of the square in the last term of (\ref{eq:SGL}). 

To understand why block coordinate methods do not converge to global minima in general, consider the function $\phi \colon (x,y) \mapsto \max\left\{x + 2 y, -2x - y  \right\}$
Taking for any $t > 0$, $x = t$ and $y = -t$ and letting $t \to \infty$ shows that $\inf_{\R^2}\phi = -\infty$. Yet it can be checked that $0 = \arg\min_x \phi(x,0) = \arg\min_y \phi(0,y)$ so that the origin is actually a stationary point for the alternating minimization algorithm applied to $\phi$.

However problem (\ref{eq:SGL}) has an additional structure: the subgradient of its objective is a simple Cartesian product. Furthermore, partial minimization is strongly convex. Combining these properties leads to the following result which to our knowledge is new. This guaranty is weak, indeed, convergence of alternating minimization methods is a difficult matter for which only few results are known and virtually none outside of separable nonsmoothness.

\begin{proposition}
				The alternating minimization algorithm applied to problem (\ref{eq:SGL}) with blocks taken in a cyclic order or using random permutations, produces a decreasing sequence of objective function value and satisfies $F(X_k) \to \rho\quad \text{ as }\ \ k \to \infty$.	
				\label{prop:convergenceAltMin}
\end{proposition}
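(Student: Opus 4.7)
The plan is to combine three standard ingredients: monotone decrease of $F(X_k)$ by construction, precompactness of the iterates by coercivity, and passage to the limit in a block-wise optimality condition using the Cartesian structure of $\partial F$ obtained in Lemma~\ref{lem:subgradientG}. Monotonicity is immediate because each iteration exactly minimizes $F$ over one block while freezing the others, so $F(X_{k+1})\leq F(X_k)$; the sequence thus decreases to some $F^\infty\geq\rho$. For boundedness of $\{X_k\}$, I note that $\Omega\colon X\mapsto\sum_i\|\xx_i\|$ is a genuine norm on $\R^{r\times m}$, so $g=\tfrac12\Omega^2$ is coercive, hence so is $F$, and the iterates remain inside the compact sublevel set $\{X:F(X)\leq F(X_0)\}$.

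The key quantitative input is that each block subproblem $h_i$ from Proposition~\ref{prop:blockMin} is strongly convex. Expanding $\sigma_N^2(\|\xx_i\|+\beta)^2$ produces a $\sigma_N^2\|\xx_i\|^2$ term, so $h_i$ has strong convexity modulus at least $2\sigma_N^2>0$, uniformly in the frozen blocks. The standard strongly-convex minimization inequality then yields, when iteration $k{+}1$ updates block $i$,
\begin{equation*}
F(X_k)-F(X_{k+1})\ \geq\ \sigma_N^2\,\|\xx_i^{(k+1)}-\xx_i^{(k)}\|^2.
\end{equation*}
Telescoping and using boundedness from below gives $\sum_k\|X_{k+1}-X_k\|_F^2<\infty$, so the successive-step gaps vanish.

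Next, I extract a subsequence $X_{k_j}\to X^\star$; by continuity $F(X^\star)=F^\infty$. Under either the cyclic rule or the random-permutation rule, every block is refreshed at least once inside any window of $m$ consecutive iterations. Combined with $\|X_{k+1}-X_k\|_F\to 0$, this supplies, for each block index $i$, a sequence of iterations at which block $i$ has just been updated and the iterate still converges to $X^\star$. Writing the closed-form block update of Proposition~\ref{prop:blockMin} at those iterations and passing to the limit yields the blockwise optimality condition
\begin{equation*}
\vec{0}\ \in\ \nabla_{\xx_i}f(X^\star)+2\sigma_N^2\Bigl(\textstyle\sum_j\|\xx_j^\star\|\Bigr)\,\partial\|\xx_i^\star\|\qquad\text{for every }i.
\end{equation*}
By the subgradient formula of Lemma~\ref{lem:subgradientG}, these blockwise conditions taken together are equivalent to $\vec{0}\in\partial F(X^\star)$, and convexity of $F$ forces $F(X^\star)=\rho$. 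Hence $F^\infty=\rho$ and the whole sequence $F(X_k)$ converges to $\rho$.

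I expect the passage to the limit in the blockwise optimality condition to be the most delicate step, since $\partial\|\cdot\|$ is set-valued and in principle one must select a subgradient $\vec{v}_i^{(k)}\in\partial\|\xx_i^{(k)}\|$ before taking limits. This is handled cleanly by the outer semicontinuity of the subdifferential of a convex function, together with continuity of $\nabla f$ and of the scalar factor $\sum_j\|\xx_j^{(k)}\|$. The random-permutation variant requires no further work, beyond observing that it still updates every block exactly once within each window of $m$ iterations, which is all the cycle argument uses.
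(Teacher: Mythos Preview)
Your proposal is correct and follows essentially the same approach as the paper's own proof: monotonicity, coercivity for boundedness, the $2\sigma_N^2$-strong convexity of each block subproblem to get summable increments, extraction of an accumulation point, and finally the Cartesian product structure of $\partial F$ from Lemma~\ref{lem:subgradientG} to upgrade blockwise optimality to global optimality. The only minor difference is that the paper justifies the limit step by invoking continuity of the explicit $\arg\min$ formula from Proposition~\ref{prop:blockMin}, whereas you phrase it via outer semicontinuity of the subdifferential; both are valid and amount to the same thing.
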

\begin{proof}
				Monotonicity is obvious here, we denote by $\tilde{\rho}$ the limiting value of the objective function along the sequence. The Cartesian product structure of the subgradient of $g$ in Lemma \ref{lem:subgradientG} entails that the subgradient of the objective of (\ref{eq:SGL}) has the same Cartesian product structure. This implies that if all the columns of $X \in \R^{r \times m}$ are blockwise optimal for problem  (\ref{eq:SGL}), then $X$ itself is the global optimum. This is because block optimality ensures that $\vec{0}$ belongs to each partial subgradients in (\ref{SGL}) and the global subgradient of (\ref{eq:SGL}) is the Cartesian product of the partial subgradients \cite[Corollary 10.11]{rockafellar2009variational}.
				
				Now the partial minimization in (\ref{SGL}) is $2 \sigma_N^2$-strongly convex. Hence, for all $k \in \N$,
				\begin{align*}
								F(X_{k}) -  F(X_{k+1}) 	\geq \sigma_N^2 \|X_{k+1} - X_{k}\|^2.
				\end{align*}
				So $\left\{ \|X_{k+1} - X_{k}\|^2 \right\}_{k\in \N}$ is summable and as $k \to \infty$, we have $\|X_{k+1} - X_{k}\| \to 0$. By monotonicity $\left\{ X_k \right\}_{k \in \N}$ is a bounded sequence since the objective in (\ref{eq:SGL}) is coercive. Let $\bar{X}$ be any accumulation point of the sequence (there exists at least one). 

				For cyclic or random permutation selections, since all blocks are visited every $m$ iteration, using the notation of Proposition \ref{prop:blockMin}, by continuity of the objective function, one must have for all $i = 1,\ldots,m$ that the quantity
				\begin{align*}
								\vec{x}_{ik} - \arg\min_{\xx_i} \|( \xx_i \aa_i^T + R_k) \Sigma^{1/2}\|_F^2 + \sigma_N^2 (\|\vec{x}_i\| + \beta_k)^2 \quad \underset{k \to \infty}{\to}\quad \vec{0},
				\end{align*}
				where $R_k:=\sum_{j\neq i} \xx_{jk} \aa_j^T-K^T$ and $\beta_k = \sum_{j\neq i} \|\xx_{jk}\|$. By continuity $\bar{X}$ must be blockwise optimal for (\ref{eq:SGL}) and hence global optimal so that $\tilde{\rho} = \rho$.
\end{proof}

\section{Numerical experiments} \label{sec:experiments}

\subsection{Instances}
As was done in~\cite{HFR18},
we report numerical experiments on two kinds of instances to test the performance of proximal decomposition methods to solve Problem~\eqref{eq:SGL}.
On the one hand, we generate random instances by sampling the elements of $A\in \R^{m \times n}$ independently from a standard normal distribution.
On the other hand, we compute Bayes $A-$optimal designs for quadratic regression over $[-1,1]^d$:
$$y(x) = \theta_0 + \sum_{i=1}^d \theta_i x_i + \sum_{1\leq i\leq j\leq d} \theta_{ij} x_i x_j + \epsilon.$$
So in practice, to construct the matrix $A$ we first form a regular grid $X=\{\xx_1,\ldots,\xx_m\}\subseteq [-1,1]^d$,
and for each $k\in[m]$ the $k$th row of $A$ is set to $$\aa_k^T=[1,(x_{ki})_{k=1,\ldots,d},(x_{ki}\, x_{kj})_{1\leq i\leq j \leq d}]\in\R^{n},$$
where $n=1+d+d(d+1)/2$.
In addition, for all our experiments, we set $K=\Sigma=I_n$, and $\sigma_N^2=0.01$.

\subsection{Algorithms}

We present results for the two proximal decomposition methods with backtracking line search
presented in Section~\ref{sec:prox_dec}, which we denote by \FB\ (for \emph{Forward-Backward}) and
\FISTA.
We also used two variants of the alternating block coordinate descent algorithm of~\ref{sec:abcd},
where blocks are selected in a fixed cyclic order ($\ABCDcy$), or according
to a new random permutation that is drawn at random every $m$ steps ($\ABCDrp$).

We compare these methods to a Fedorov-Wynn type vertex-direction method (\VDM),
which is, in fact, an adaptation of the celebrated Frank-Wolfe algorithm
for constrained convex optimization~\cite{FW56}.
Several variants exist to compute the step sizes $\alpha$ of this algorithm, in particular,
optimal step length can be used, see~\cite{HFR18}. However, no simple formula exists for the optimal step lengths in the case of \emph{Bayes} A-optimality, so we next describe
a method with backtracking line search, which also allows a more straightforward comparison with \FB\ and \FISTA.

\vspace{.1in}

\begin{algorithm}[t]
\caption{Vertex Direction Method with Backtracking line search}
\label{alg:VDM}
\begin{algorithmic}[1]                    
		\alginput $\eta > 1$, $L_0 > 1$, $\ww_0\in\Delta$
    \medskip
		\For{$k=1,2\ldots$}{
			\State Compute the vector $\vec{d}$ with elements $d_i=\frac{1}{\sigma_N^2}\|K^T M_N(\vec{w}_{k-1})^{-1} \aa_i\|^2$.
			\State Select $i^* \in \arg\max_i \{d_i\}$.
			\State Find the smallest $j \in \N$ such that, with $\bar{L} = \eta^{j} L_{k-1}$,
						$\alpha = \frac{1}{\raisebox{-1mm}{{\small $\bar{L}$}}}$
						and $\bar{\vec{w}}= (1-\alpha) \vec{w}_{k-1}  + \alpha \vec{e}_{i^*}$,
									  $$\Phi_{A_K}(\bar{\vec{w}}) \leq \Phi_{A_K}(\vec{w}_{k-1}) - \vec{d}^T (\bar{\vec{w}}-\vec{w}_{k-1}) 
									  + \frac{\bar{L}}{2} \| \bar{\vec{w}}-\vec{w}_{k-1} \|^2.$$
			\State Set $L_k=\bar{L}$ and $\vec{w}_k=\bar{\vec{w}}$.
		\EndFor}
\end{algorithmic}
\end{algorithm}

We will also compare to the multiplicative algorithm~\cite{STT78} (\MUL), where at each iteration,
we set 
\begin{align} \label{eq:gradient}
(\vec{d}_k)_i = -\frac{\partial \Phi_{A_k}(\vec{w}_k)}{\partial w_i}=\frac{1}{\sigma_N^2}\|K^T M_N(\vec{w}_{k})^{-1} \aa_i\|^2, 
\end{align}
and we perform the update $\vec{w}_{k+1} = \frac{\vec{w}_k \odot \vec{d}_k}{\vec{w}_k^T \vec{d}_k}$; here, the symbol $\odot$ is used for the Hadamard (elementwise) product of two vectors.

For both \VDM\ and \MUL, the cost of one iteration is dominated by the cost of computing $\vec{d}=-\nabla \Phi$ which requires the inversion of $M_N(\vec{w})$ with computational cost $O(n^3)$ and multiplication by $A^T$ which cost is $O(n^2 \times m)$ and dominates the overall cost of this gradient computation. 

For all algorithms, we used the constants $\eta=2$ and $L_0=1$ for backtracking line searches.
The initial designs were set to $\ww_0=\frac{1}{m} \vec{1}_m$ for $\VDM$ and $\MUL$, and 
we used the initial matrix $X_0=0\in\R^{r \times m}$ for the other algorithms.

In our experiments, $K$ is taken to be the identity so that all the algorithms have iteration complexity of order $O(n^2 m)$ and thus comparing the evolution of the cost along iterations of each algorithm provides a good intuition about their comparative performances. Note that for $\ABCDcy$ and $\ABCDrp$, we consider that one iteration is complete after going through a full cycle so that all the entries of $X$ are updated.

\subsection{Results}

To monitor the speed of convergence of the algorithms, 
we can compute the design efficiencies
$$\operatorname{eff}_{A_K} (\vec{w}_k) := \frac{\rho}{\Phi_{A_K}(\vec{w}_k)},$$
where $\rho = \inf_{\vec{w^*} \in \Delta} \Phi_{A_K}(\vec{w^*})$ 
was computed by letting the multiplicative algorithm run for a very long time.
On the graphics, we plot the quantity $\log_{10}(1-\operatorname{eff}_{A_K} (\vec{w}_k))$,
so a value of $-k$ corresponds to an efficiency of $1-10^{-k}$.
We will also use the following optimality measure:
$$
 s_k = \max_{i=1,\ldots,m}\ (\vec{d}_k)_i - \vec{w}_k^T \vec{d}_k,
$$
where $\vec{d}_k$ is the gradient of $-\Phi_{A_K}$ at $\vec{w}_k$, see~\eqref{eq:gradient}.
It is folklore (see~\cite{Puk93}) 
that this expression gives a duality bound on the \emph{efficiency} of the
design $\vec{w}_k$: $\operatorname{eff}_{A_K} (\vec{w}_k)\geq 1-\varepsilon_k$, where $\varepsilon_k:= \frac{s_k}{\Phi_{A_K}(\vec{w}_k)+s_k}$.
Furthermore, for any sequence $(\ww_k)_{k\in\mathbb{N}}$ of designs converging to an optimal design $\vec{w}^*$, it is known that $s_k$ converges to $0$, so the
lower bound $1-\varepsilon_k$ on the design efficiency converges to $1$. The algorithms \FB, \FISTA\ and \texttt{ABCD} do not directly involve iterates
$\vec{w}_k$, but we can compute the above efficiency bound by setting $\vec{w}_k=\|\vec{x}_k\|/\Omega(X)$.

 \begin{figure}[t]
 \begin{tabular}{cccl}
   \hspace{-3mm} \scalebox{0.37}{\includegraphics{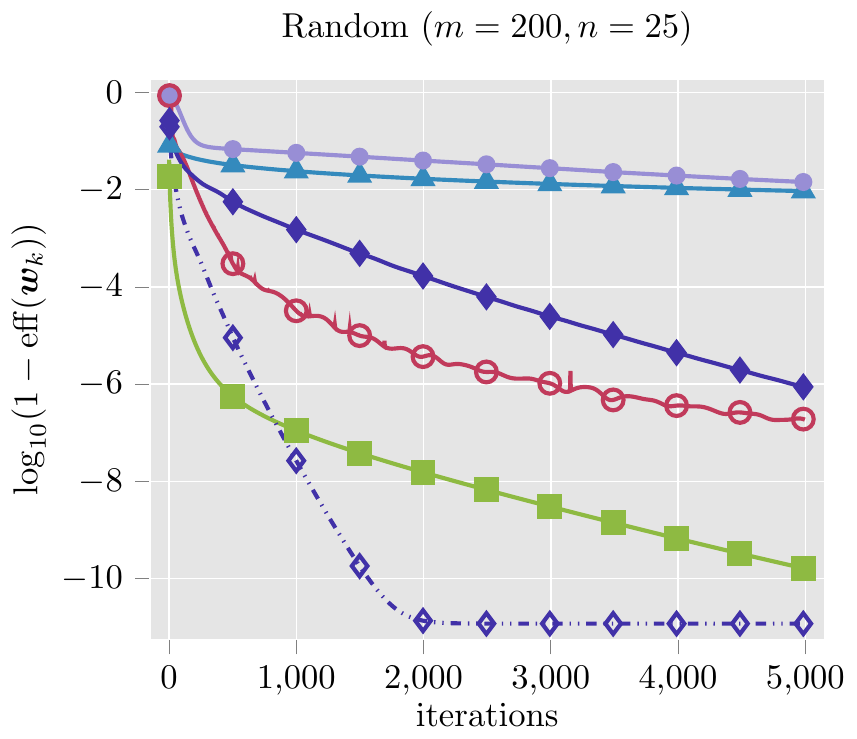}} & \hspace{-5mm}\scalebox{0.37}{\includegraphics[clip,trim=8mm 0 0 0]{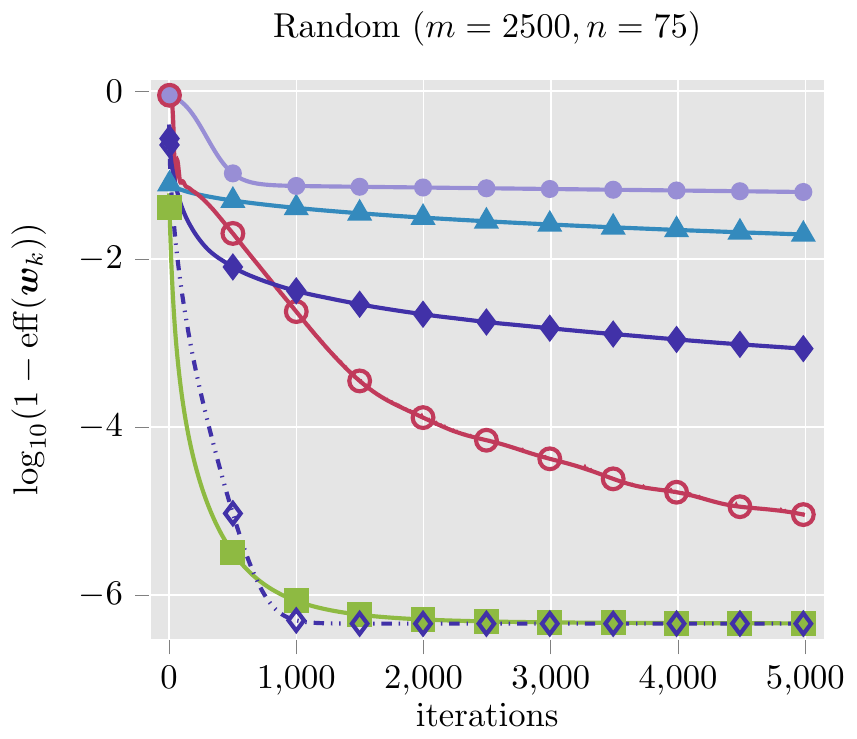}} & \hspace{-5mm}\scalebox{0.37}{\includegraphics[clip,trim=8mm 0 0 0]{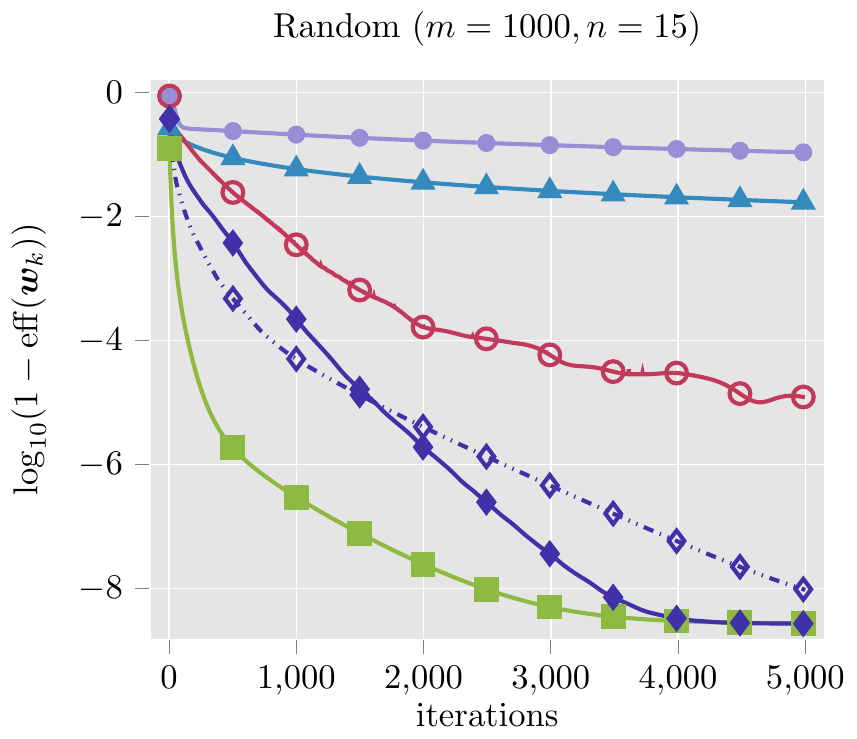}} & \hspace{-5mm}\scalebox{0.37}{\includegraphics[clip,trim=8mm 0 0 0]{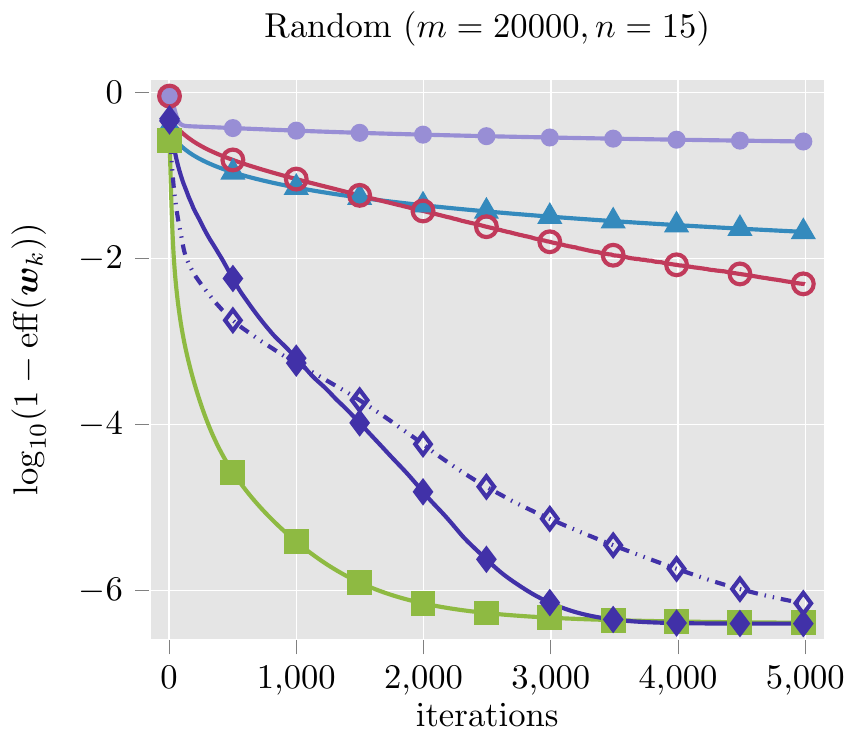}}\\
   \hspace{-3mm} \scalebox{0.37}{\includegraphics{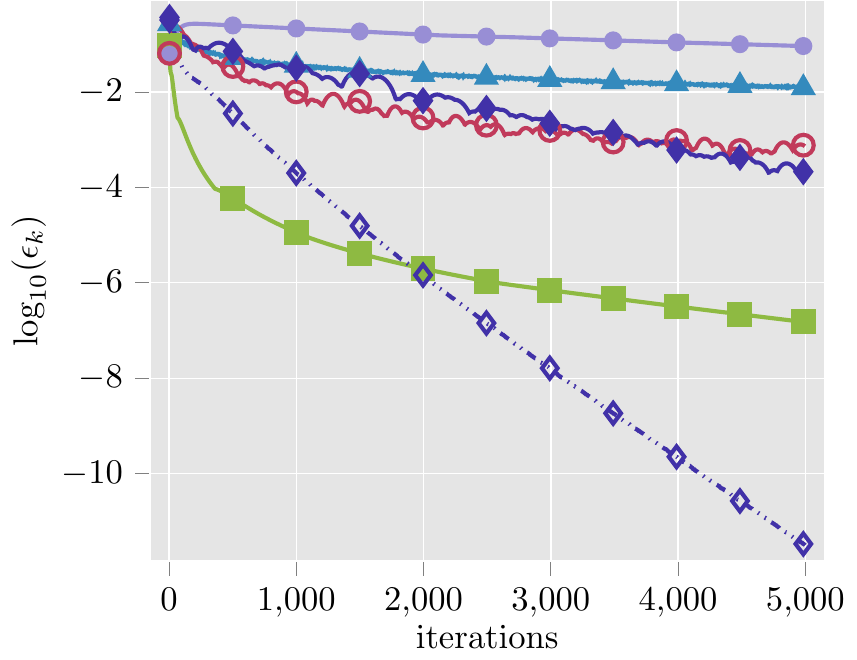}} & \hspace{-5mm}\scalebox{0.37}{\includegraphics[clip,trim=8mm 0 0 0]{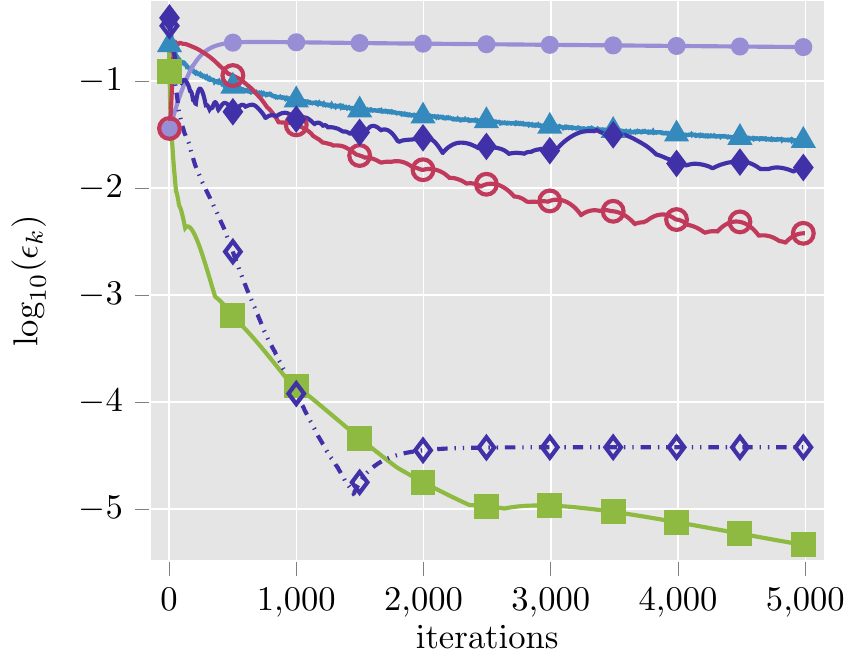}} & \hspace{-5mm}\scalebox{0.37}{\includegraphics[clip,trim=8mm 0 0 0]{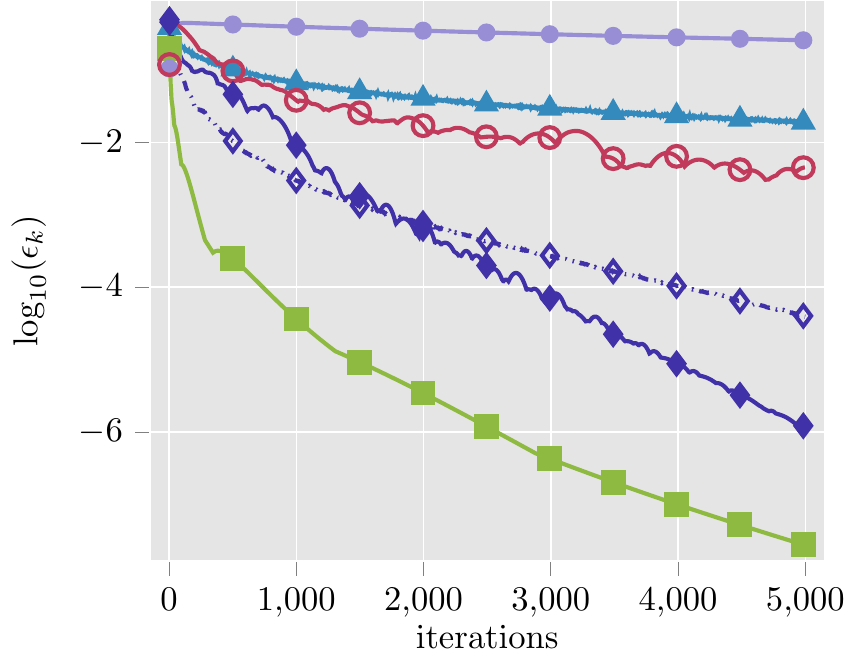}} & \hspace{-5mm}\scalebox{0.37}{\includegraphics[clip,trim=8mm 0 0 0]{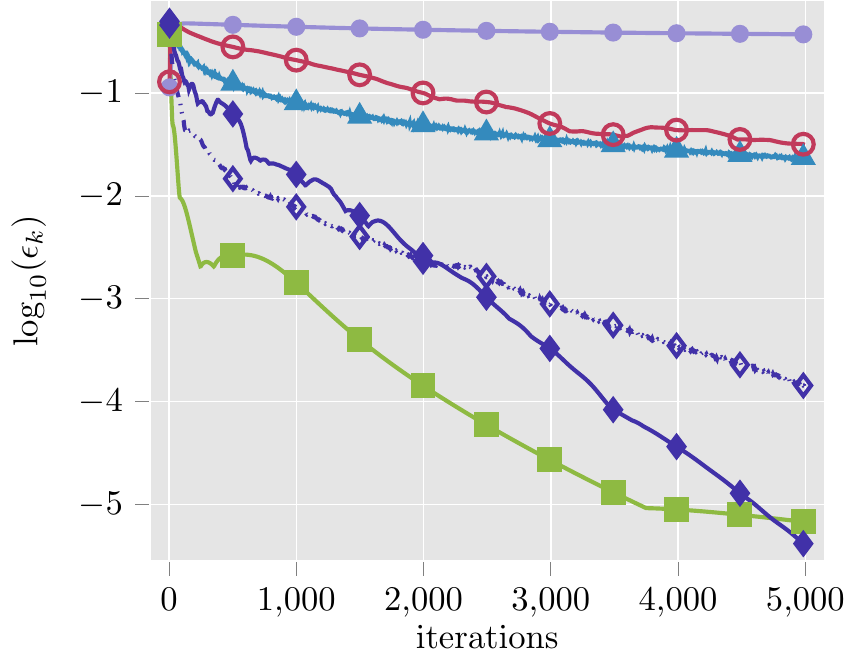}}\\
   \hspace{-3mm} \scalebox{0.37}{\includegraphics{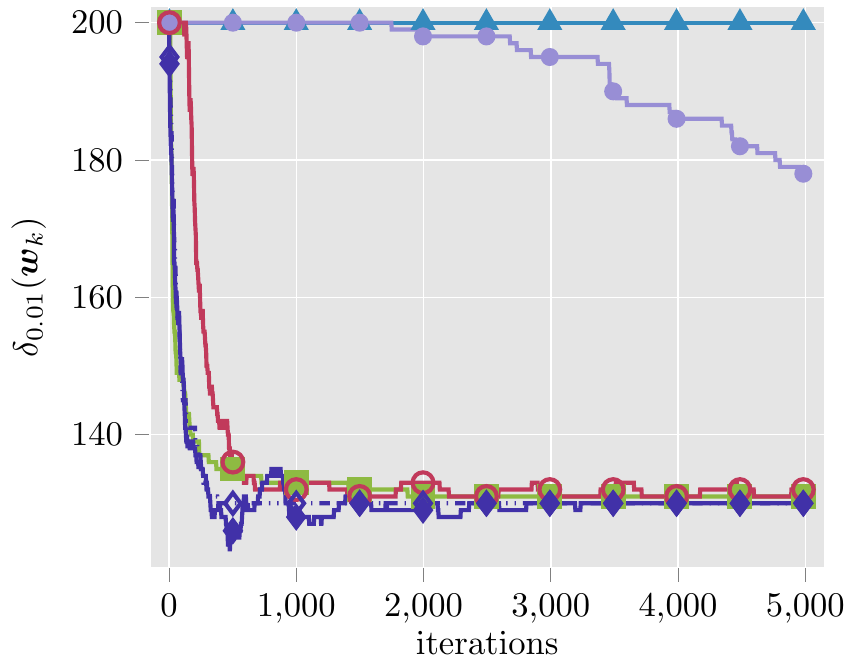}} & \hspace{-5mm}\scalebox{0.37}{\includegraphics[clip,trim=4.6mm 0 0 0]{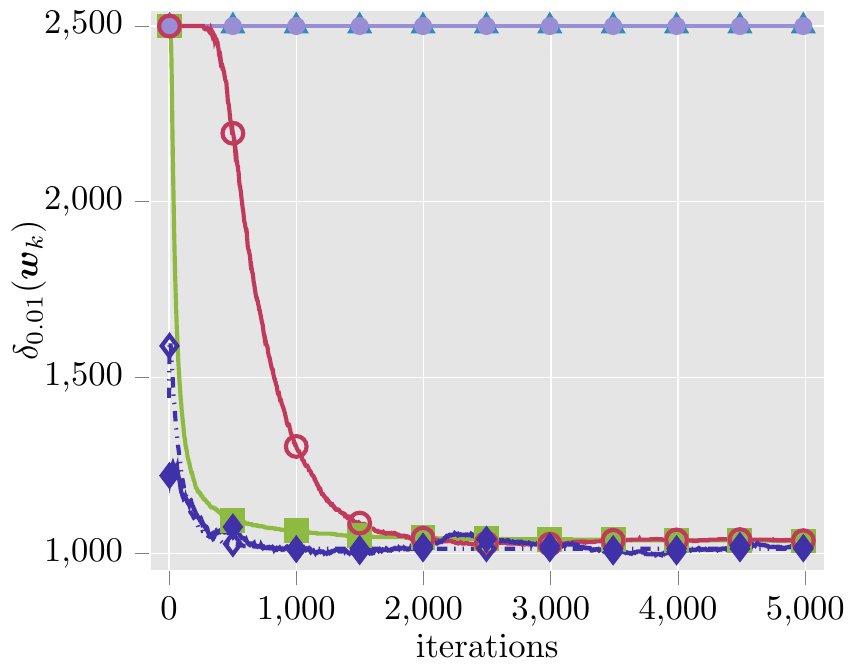}} & \hspace{-5mm}\scalebox{0.37}{\includegraphics[clip,trim=4.6mm 0 0 0]{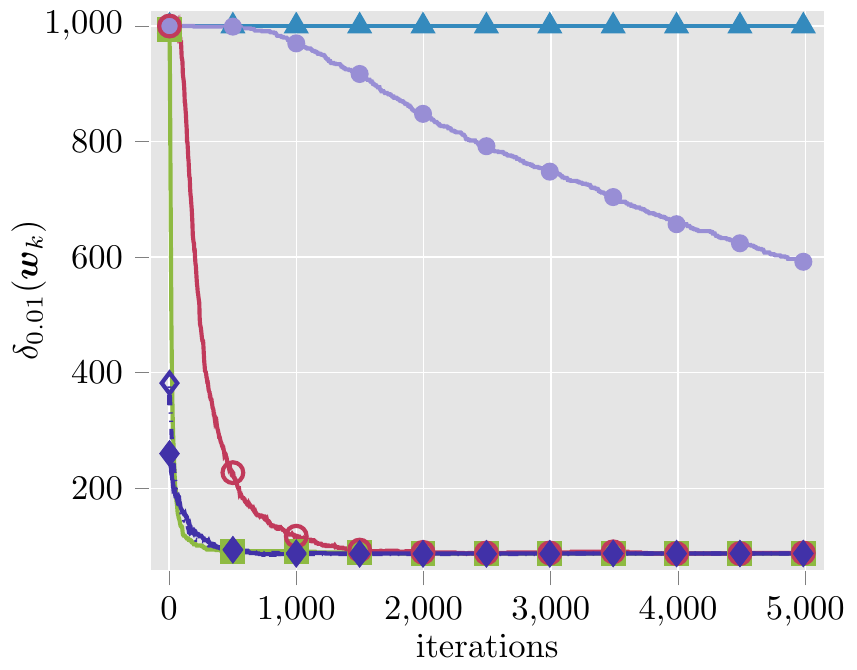}} & \hspace{-5mm}\scalebox{0.37}{\includegraphics[clip,trim=8mm 0 0 0]{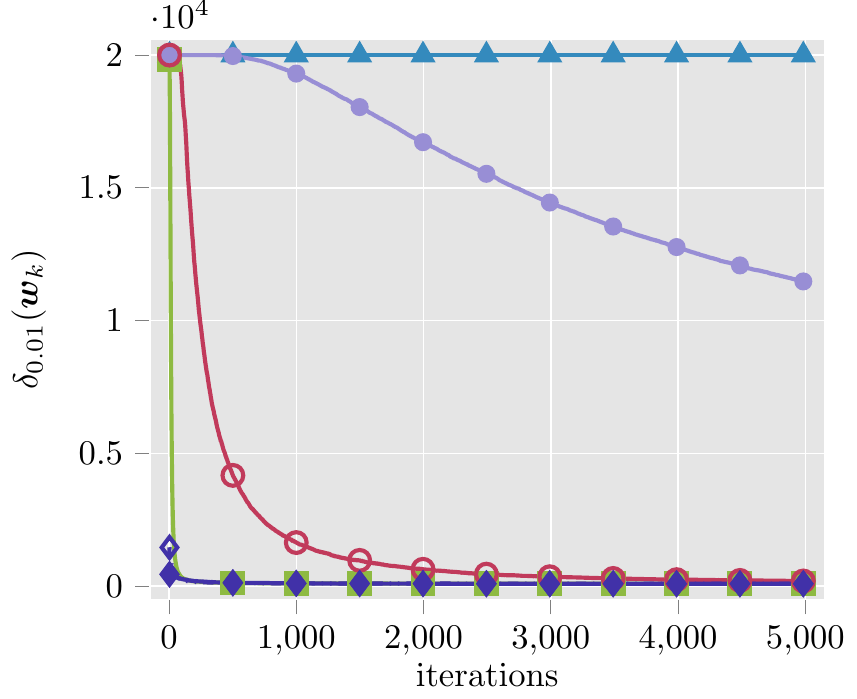}}
 \end{tabular}
 \begin{center}
 \includegraphics[height=1.1em]{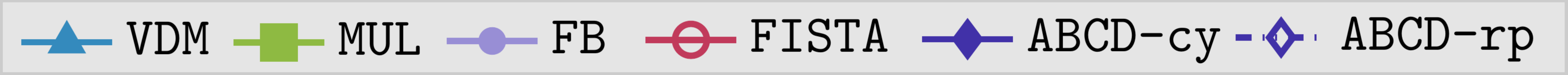}  
 \end{center}
 \caption{Efficiency, duality bound and support size for four random optimal design instances. \MUL\ and alternating minimization methods are the best performers.} \label{fig:results-r}
\end{figure}

 \begin{figure}[t]
 \begin{tabular}{cccl}
   \hspace{-3mm} \scalebox{0.37}{\includegraphics{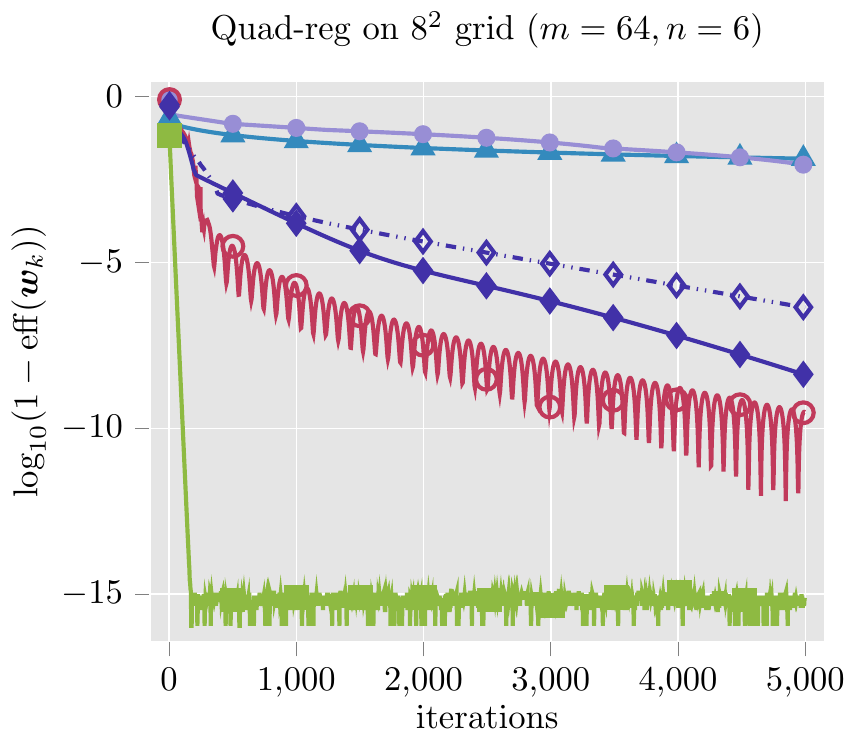}} & \hspace{-5mm}\scalebox{0.37}{\includegraphics[clip,trim=8mm 0 0 0]{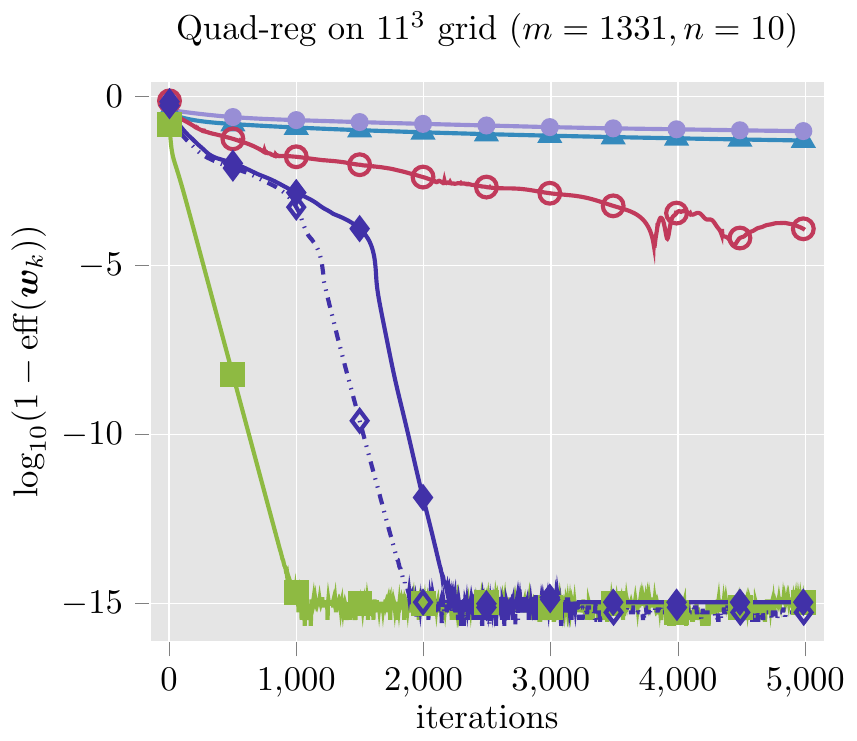}} & \hspace{-5mm}\scalebox{0.37}{\includegraphics[clip,trim=8mm 0 0 0]{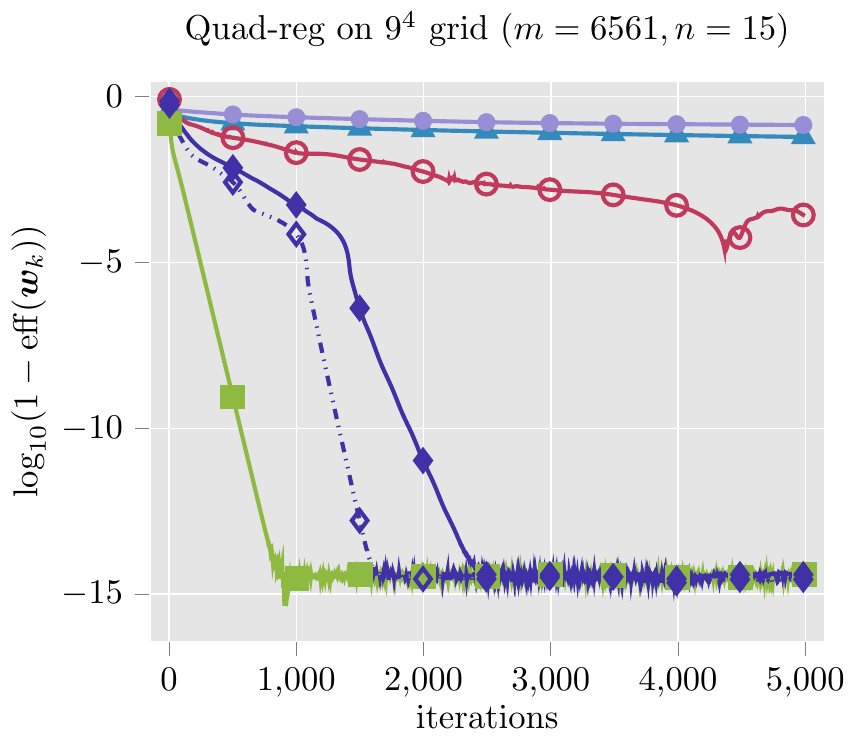}} & \hspace{-5mm}\scalebox{0.37}{\includegraphics[clip,trim=8mm 0 0 0]{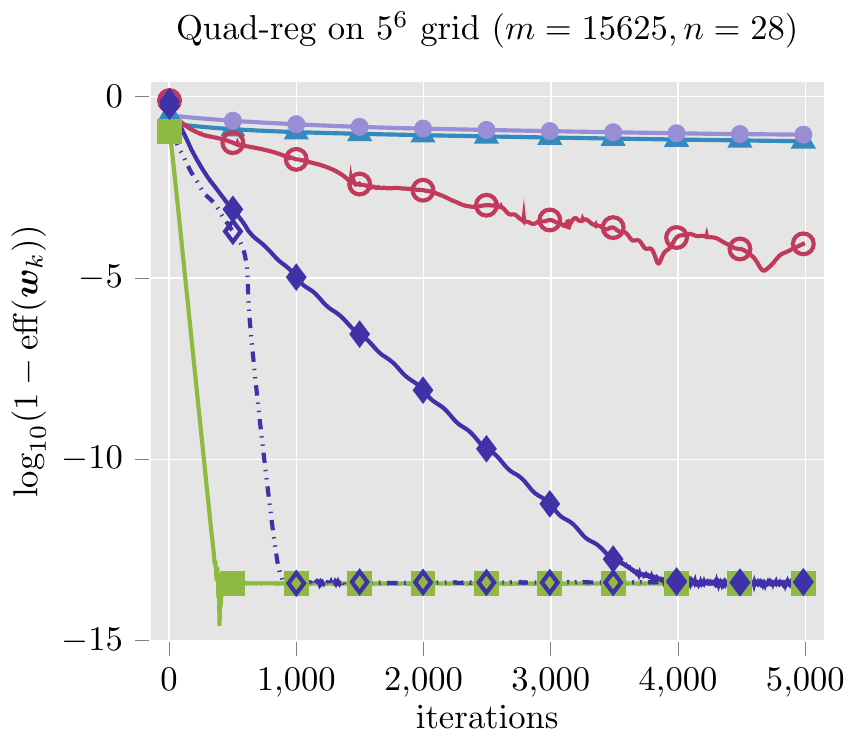}}\\
   \hspace{-3mm} \scalebox{0.37}{\includegraphics{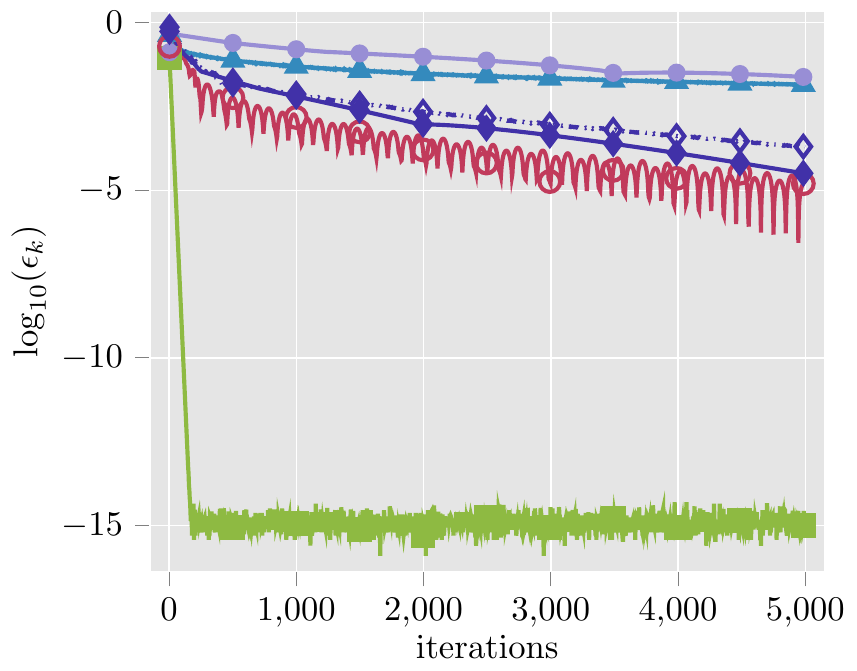}} & \hspace{-5mm}\scalebox{0.37}{\includegraphics[clip,trim=8mm 0 0 0]{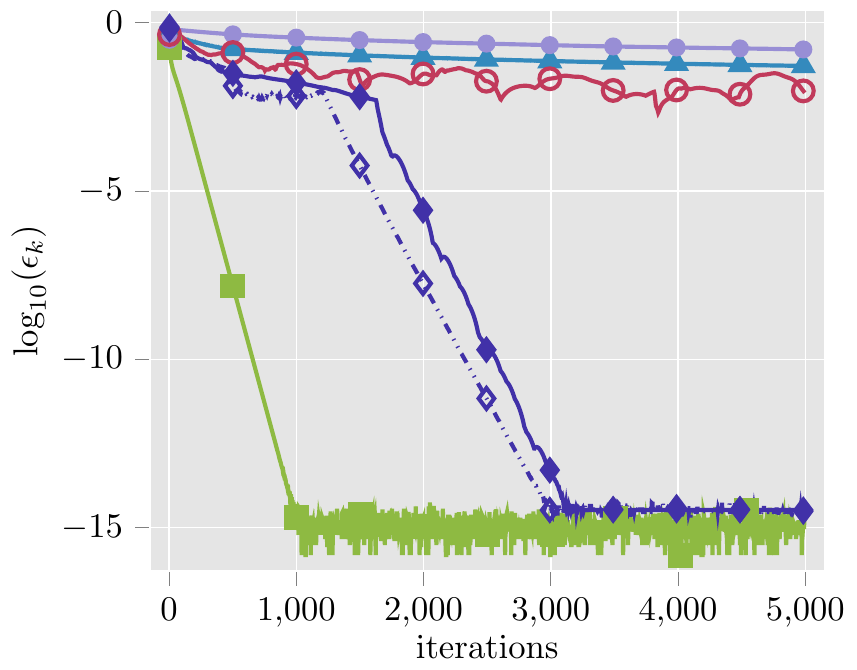}} & \hspace{-5mm}\scalebox{0.37}{\includegraphics[clip,trim=8mm 0 0 0]{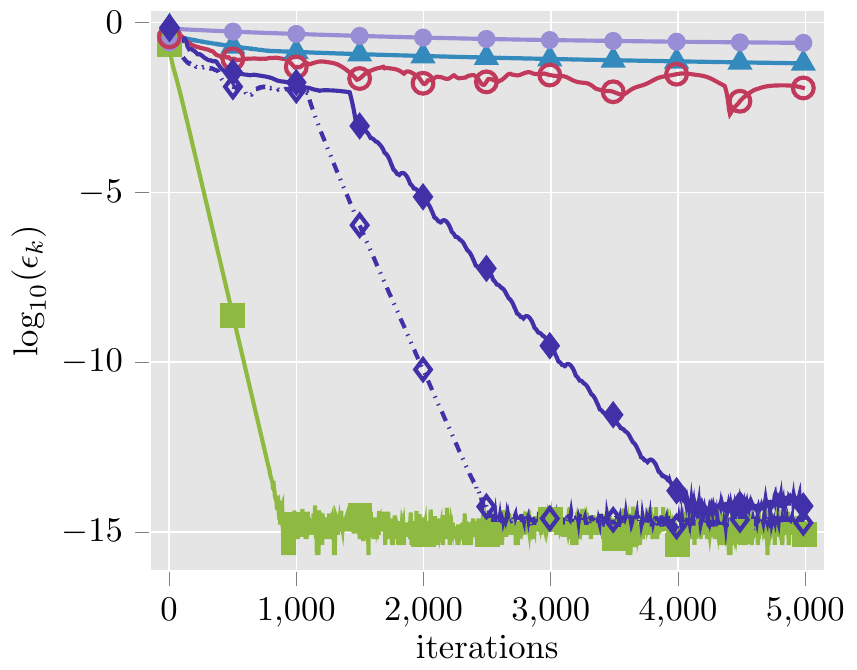}} & \hspace{-5mm}\scalebox{0.37}{\includegraphics[clip,trim=8mm 0 0 0]{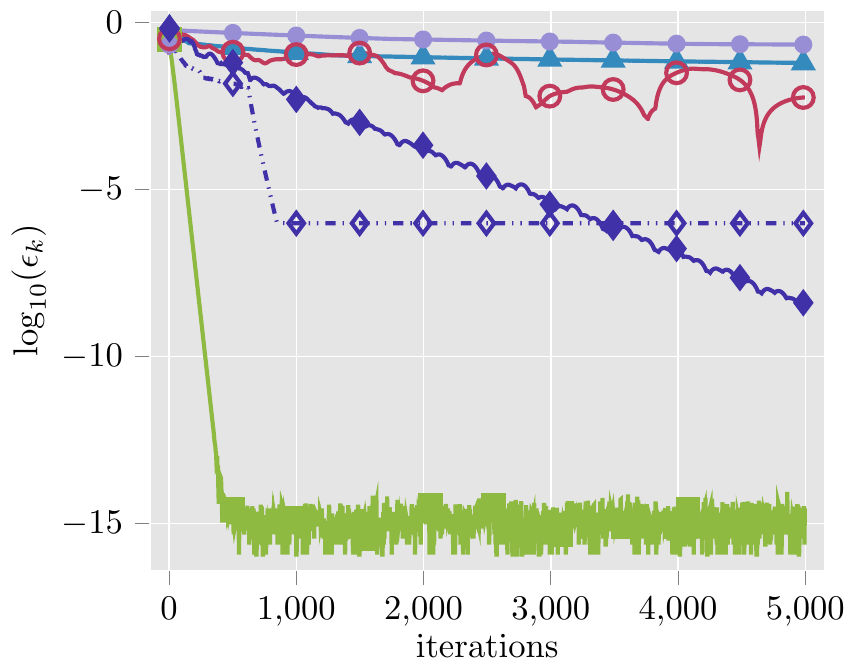}}\\
   \hspace{-3mm} \scalebox{0.37}{\includegraphics{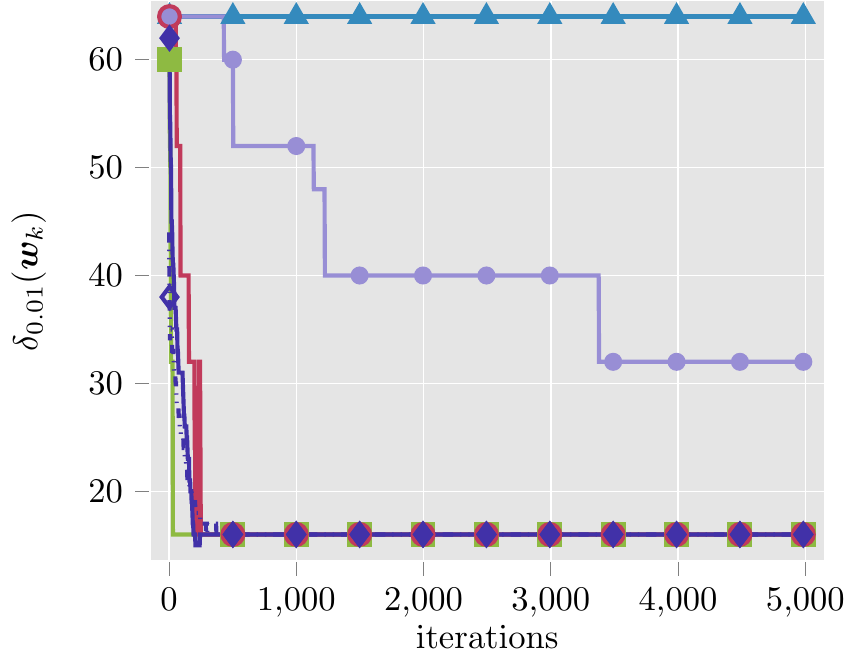}} & \hspace{-5mm}\scalebox{0.37}{\includegraphics[clip,trim=4.6mm 0 0 0]{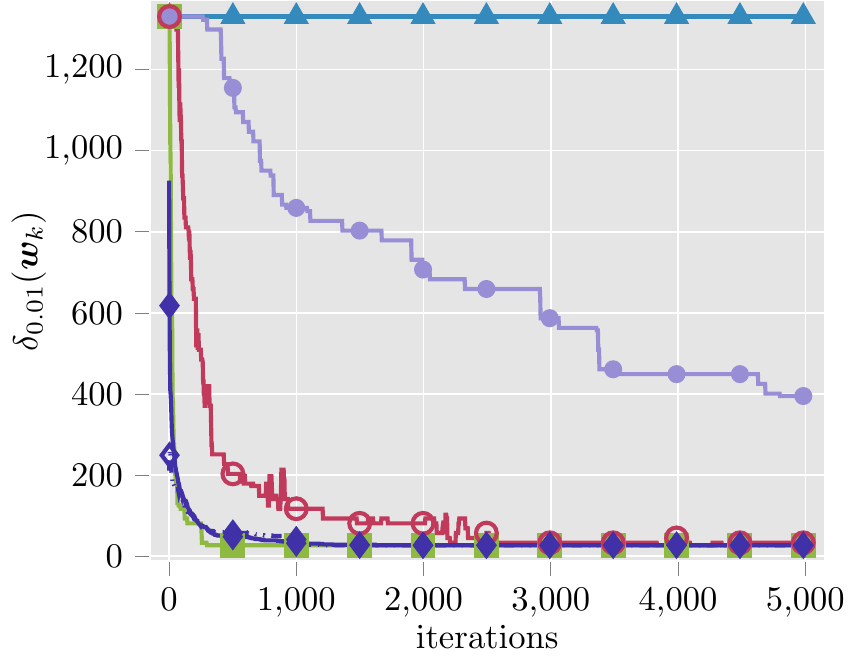}} & \hspace{-5mm}\scalebox{0.37}{\includegraphics[clip,trim=4.6mm 0 0 0]{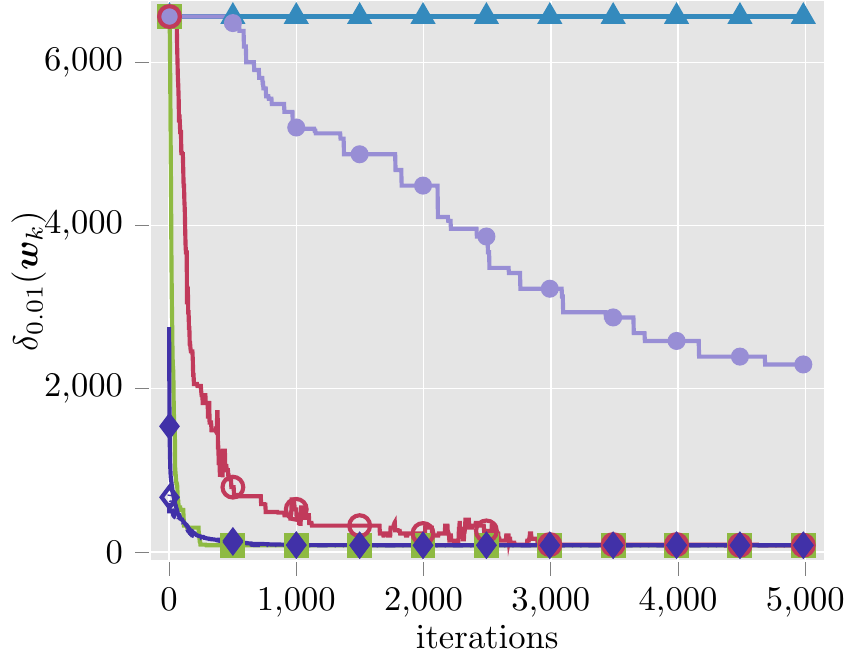}} & \hspace{-5mm}\scalebox{0.37}{\includegraphics[clip,trim=8mm 0 0 0]{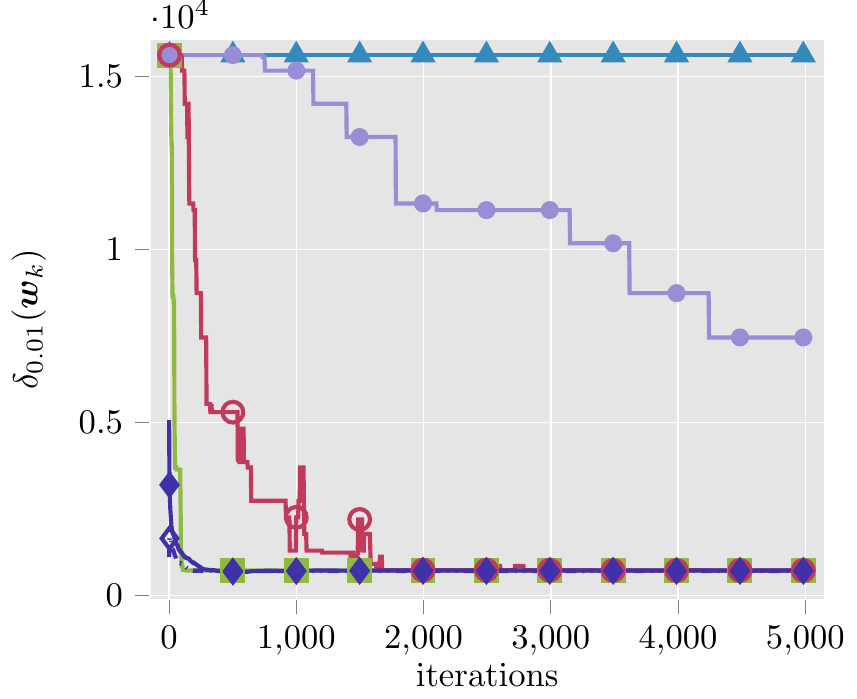}}
 \end{tabular}
 \begin{center}
 \includegraphics[height=1.1em]{legend.pdf}  
 \end{center}
 \caption{Efficiency, duality bound and support size for four quadratic regression instances. \MUL\ and alternating minimization methods are the best performers. Flat lines indicate that the algorithms have reached numerical precision.} \label{fig:results-q}
\end{figure}

\medskip
Another important measure of a design's quality is its sparsity. It is well known that optimal designs are supported by a few points only,
which is a desired property for many applications.
The problem formulation~\eqref{eq:SGL} gives a new explanation for this fact, as the penalty term $\Omega(X)$
is added in group lasso regression in order to induce block sparsity,
so we expect the optimal matrix $X^*$ to have a lot of columns equal to $\vec{0}$
(the squared penalty term $g(X)=\frac{1}{2} \Omega(X)^2$ is also known to be block-sparsity inducing, cf.~\cite{BAC08}).

A remarkable property of the proximal decomposition methods presented in this article is that
$\operatorname{prox}_{tg}(V)$ acts as a thresholding operator on $V$, literally zeroing a lot of columns.
The same is true for alternating block coordinate descent methods, in which whole columns are set to $\vec{0}$ if a certain threshold property holds.
As a result, the iterates produced by \FB, \FISTA, \ABCDcy\ and \ABCDrp\ are expected to have a small support. 
To observe this fact, we measure the sparsity of a design by $\delta_{0.01}(\vec{w}_k)$,
the number of coordinates of $\ww_k$ exceeding the value $\frac{0.01}{m}$.

\bigskip
The evolution of the efficiency, the duality bound $1-\varepsilon_k$, and the support size
during the 5000 first iterations of each algorithm is depicted
in Figure~\ref{fig:results-r} and Figure~\ref{fig:results-q}
for four random and quadratic regression instances of various sizes. 
As already mentioned,
all the algorithms we compare have a complexity of $O(n^2m)$ per iteration. It is therefore possible to get a rough idea of their comparative computational efficiency from an \emph{iteration-based} performance analysis.
Performing a more precise time-based analysis depends on optimization of the linear algebra operations required for each algorithm and is beyond the scope of this work, so we stick to iteration-based analysis.

We observe several properties of the new algorithms on these figures. First, the effect of acceleration can clearly be seen on the figures, as \FISTA\ always beats \VDM,
while the simple forward-backward algorithm \FB\ is typically outperformed. As explained in Section~\ref{sec:prox_dec}, this comes at the price of \FISTA\ \emph{not being} a descent method,
which can also be observed on the plots, especially for the quadratic regression instances (Figure~\ref{fig:results-q}).
Second, \MUL\ is performing in general better than other algorithms, closely followed by alternating minimization methods. The performance of the remaining algorithm is in general bellow. 
These preliminary results suggest that the group lasso formulation of the experimental design problem has the potential to help deciphering powerful algorithms for the later problem.  In particular, the alternating block coordinate descents exhibit a nice linear convergence on many instances.
Pushing further would requires to look more carefully at the implementation details of each algorithms and perform much larger scale experiments which is beyond the scope of this paper. Many upgrades and improvements also have to be evaluated, e.g.\ preconditioning, clever subsampling of the blocks to be updated at each iteration.
Third, the support plots show that \FISTA\ and \texttt{ABCD} quickly converge to a sparse solution. \MUL\ also quickly identifies a design with a small support. We recall that the iterates of \FISTA\ and \texttt{ABCD} are \emph{truly sparse}, 
while for \MUL, this is only a \emph{numerical sparsity}, as the iterates $\ww_k$ remain strictly positive.
On the other hand \VDM\ always fail to identify a smaller support.
Finite time identification of sparsity patterns is an active topic of research in nonsmooth optimization and we believe that this property can be exploited to yield high-performance algorithms to solve very large scale optimal design problems.

\section{Conclusion}

This paper presents a strong, previously unrevealed connection between two standard problems in statistics (Bayes A-optimal design and group lasso regression),
hence clearing the path to a convergence of algorithms used in the communities of optimal design of experiments and machine learning.
While the new methods presented in this article are not yet competitive with other algorithms for computing optimal designs over a finite design space,
they certainly present interesting features, such as sparse iterates and a guaranteed speed of convergence,
and we believe that there is still an important room for improvement, e.g. by using
recent techniques based on subsampling oracles~\cite{KPA18} or lazy separators~\cite{BPZ17}.
Conversely, an interesting perspective is to use well established techniques of optimal experimental design, such as methods to restrict the set of potential support points
of an optimal design~\cite{Pro13}, to improve
algorithms that were designed to solve group lasso regressions. 

Another topic for further research is whether we can reformulate other design problems (such as the D-optimal design problem, or problems with constraints on the design weights)
as unconstrained convex optimization problems.





\end{document}